\newif\ifpdf
\numberwithin{equation}{section} \swapnumbers
\newtheorem{satz}{Satz}[section]
\newtheorem{theorem}[satz]{Theorem}
\newtheorem{proposition}[satz]{Proposition}
\newtheorem{corollary}[satz]{Corollary}
\newtheorem{lemma}[satz]{Lemma}
\newtheorem{definition}[satz]{Definition}
\newtheorem{remark}[satz]{Remark}
\begin{document}

\hyphenation{rea-li-za-tion}

\title[Affine realizations for L\'evy term structure models]{Existence of affine realizations for L\'evy term structure models}
\author{Stefan Tappe}
\address{Leibniz Universit\"{a}t Hannover, Institut f\"{u}r Mathematische Stochastik, Welfengarten 1, 30167 Hannover, Germany}
\email{tappe@stochastik.uni-hannover.de}
\thanks{The author is grateful to Damir Filipovi\'c, Michael Kupper, Elisa Nicolato, Daniel Rost, David Skovmand, Josef Teichmann and Vilimir Yordanov for their helpful remarks and discussions.}
\thanks{The author is also grateful to two anonymous referees for their helpful comments and suggestions.}
\begin{abstract}
We investigate the existence of affine realizations for term
structure models driven by L\'evy processes. It turns out that we
obtain more severe restrictions on the volatility than in
the classical diffusion case without jumps. As special cases, we study constant direction volatilities and the existence of short rate realizations.
\end{abstract}
\keywords{L\'evy term structure model,
invariant foliation, affine realization, short rate realization}
\subjclass[2010]{91G80, 60H15}
\maketitle

\section{Introduction}

A zero coupon bond with maturity $T$ is a financial asset which pays the holder one unit of cash at $T$. Its price at $t \leq T$ can be written as the continuous discounting of one unit of cash
\begin{align*}
P(t,T) = \exp \bigg( -\int_t^T f(t,s) ds \bigg),
\end{align*}
where $f(t,T)$ is the rate prevailing at time $t$ for instantaneous borrowing at time $T$, also called the forward rate for date $T$. The classical continuous framework for the evolution of the forward rates goes back to Heath, Jarrow and Morton (HJM) \cite{HJM}. They assume that, for every date $T$, the forward rates $f(t,T)$ follow an It\^o process of the form
\begin{align}\label{hjm-forward-rates-wiener}
df(t,T) = \alpha_{\rm HJM}(t,T) dt + \sigma(t,T) dW_t,\quad t \in [0,T]
\end{align}
where $W$ is a Wiener process.

In this paper, we consider L\'evy term structure models, which generalize the classical HJM
framework by replacing the Wiener process $W$ in (\ref{hjm-forward-rates-wiener}) by a more general L\'{e}vy process $X$, also taking into account the occurrence of jumps. This extension has been proposed by Eberlein et
al.\ \cite{Eberlein-Raible,Eberlein_O, Eberlein_J, Eberlein_K1,
Eberlein_K2,Eberlein_Kluge_Review}.
Other approaches in order to generalize the classical HJM framework
can be found in Bj\"ork et al.\ \cite{BKR,BKR0}, Carmona and
Tehranchi \cite{carteh}, and, e.g., \cite{Shirakawa, Jarrow_Madan,
Hyll}.

In the sequel, we therefore assume that, for every date $T$, the forward rates $f(t,T)$ follow an It\^o process
\begin{align*}
df(t,T) = \alpha_{\rm HJM}(t,T) dt + \sigma(t,T) dX_t,\quad t \in [0,T]
\end{align*}
with $X$ being a L\'{e}vy process. Note that such an HJM interest rate model is an infinite dimensional object, because for every date of maturity $T \geq 0$ we have an It\^{o} process.

There are several reasons why, in practice, we are interested in the existence of a finite dimensional realization, that is, the forward rate evolution being described by a finite dimensional state process. Such a finite dimensional realization ensures larger analytical tractability of the model, for example, in view of option pricing, see \cite{Duffie-Kan}. Moreover, as argued in \cite{bautei:05}, HJM models without a finite dimensional realization do not seem reasonable, because then the support of the forward rate curves $f(t,t+\cdot)$, $t > 0$ becomes too large, and hence any ``shape'' of forward rate curves, which we assume from the beginning to model the
market phenomena, is destroyed with positive probability.

For classical HJM models driven by a Wiener process, the construction of finite dimensional realizations for particular volatility structures has been treated in \cite{Jeffrey, Ritchken, Duffie-Kan, Bhar, Inui, Bj_Ch, Bj_Go,
Kwon_2001, Kwon_2003}, and finally, the problem concerning the existence of finite dimensional realizations has completely been solved in
\cite{Bj_Sv, Bj_La, Filipovic}, see also \cite{Filipovic-Teichmann-royal, Tappe-Wiener}. A survey about the topic can be found in \cite{Bjoerk}.

However, there are only very few references, such as \cite{Eberlein-Raible,
Kuechler-Naumann, Gapeev-Kuechler, Hyll}, that deal with affine
realizations for term structure models with jumps.

The purpose of the present paper is to investigate when a L\'evy driven term structure
model admits an affine realization.

The main idea is to switch to the Musiela parametrization of forward curves
$r_t(x) = f(t,t+x)$ (see \cite{Musiela}), and to consider the forward rates as the solution of
a stochastic partial differential equation (SPDE), the so-called
HJMM (Heath--Jarrow--Morton--Musiela) equation
\begin{align}\label{HJMM}
\left\{
\begin{array}{rcl}
dr_t & = & \big( \frac{d}{dx} r_t + \alpha_{\rm HJM}(r_t) \big) dt +
\sigma(r_{t-})dX_t
\medskip
\\ r_0 & = & h_0,
\end{array}
\right.
\end{align}
on a suitable Hilbert space $H$ of forward curves, where
$d / dx$ denotes the differential operator, which is generated
by the strongly continuous semigroup $(S_t)_{t \geq 0}$ of shifts. Such models have
been investigated in \cite{Filipovic-Tappe, P-Z-paper, Marinelli}.

The bank account $B$ is the riskless asset, which starts with one unit of cash and grows continuously at time $t$ with the short rate $r_t(0)$, i.e.
\begin{align*}
B(t) = \exp \bigg( \int_0^t r_s(0) ds \bigg), \quad t \geq 0.
\end{align*}
According to \cite{ds94}, the implied bond market, which we can now express as
\begin{align*}
P(t,T) = \exp \bigg( -\int_0^{T-t} r_t(x)dx \bigg), \quad 0 \leq t
\leq T
\end{align*}
is free of arbitrage if there exists an equivalent (local)
martingale measure $\mathbb{Q} \sim \mathbb{P}$ such that the discounted bond prices
$$
\frac{P(t,T)}{B(t)}, \quad t \in [0,T]
$$
are local $\mathbb{Q}$-martingales for all maturities $T$. If we formulate the
HJMM equation (\ref{HJMM}) with respect to such an equivalent
martingale measure $\mathbb{Q} \sim \mathbb{P}$, then the drift is determined by the
volatility, i.e. $\alpha_{\rm HJM} : H \rightarrow H$ in
(\ref{HJMM}) is given by the HJM drift condition
\begin{align}\label{alpha-HJM}
\alpha_{\rm HJM}(h) = \frac{d}{dx} \Psi \bigg(
-\int_0^{\bullet} \sigma(h)(\eta) d\eta \bigg) = - \sigma(h) \Psi'\bigg( -\int_0^{\bullet}
\sigma(h)(\eta) d\eta \bigg), \quad h \in H
\end{align}
where $\Psi$ denotes the cumulant generating function of the L\'evy
process, see \cite[Sec. 2.1]{Eberlein_O}.

As in \cite{Bj_Sv, Bj_La, Filipovic}, we can now regard the problem from a geometric point of view, i.e., the forward rate
process has to stay on a collection of finite dimensional affine manifolds indexed by the time $t$, a so-called foliation.

In general, invariance of a manifold for a stochastic process with jumps is a difficult issue, because we have to ensure that the process does not jump out of the manifold. This problem has been addressed in \cite{Kurtz-Pardoux-Protter}, where the authors consider a particular Stratonovich type integral (introduced by S.~I. Marcus, see \cite{Markus-1,Markus-2}) which, intuitively speaking, ensures that the jumps of a stochastic differential equation with vector fields being tangential to a given manifold $\mathcal{M}$, map the manifold $\mathcal{M}$ onto itself.

In the present paper, we avoid this problem by focusing on affine realizations, because for affine manifolds the jumps will automatically be captured, provided the volatility $h \mapsto \sigma(h)$ is tangential at each point of the manifold. Hence, in our framework, the stochastic integral in (\ref{HJMM}) is the usual It\^{o} integral.

Although the jumps of the L\'{e}vy process $X$ do not cause problems in this respect, that is, we do not have to worry that the solution $r$ jumps out of the manifold, our investigations will show -- and this is due to the particular structure of the HJM drift term $\alpha_{\rm HJM}$ in (\ref{alpha-HJM}) which ensures the absence of arbitrage -- that we obtain more severe restrictions
on the volatility $\sigma$ than in the classical diffusion case.

The remainder of this text is organized as follows: In Section
\ref{sec-foliations} we provide results on invariant foliations and
on affine realizations for SPDEs driven by L\'evy
processes. Afterwards, we introduce the term structure model in Section
\ref{sec-model}. After these preparations, in Sections \ref{sec-nec} and \ref{sec-suff} we present necessary and sufficient conditions
for the existence of affine realizations for L\'evy term structure models.
In Section \ref{sec-const} we study constant volatilities, and in Section \ref{sec-ddv} constant direction volatilities and consequences for the existence of short rate realizations. For the sake of lucidity, Appendix \ref{app-series} provides some auxiliary results that are needed in this text.

\section{Invariant foliations for SPDEs driven by L\'evy processes}\label{sec-foliations}

In this section, we provide results on invariant foliations for
SPDEs driven by L\'evy
processes, which we will apply to the HJMM equation (\ref{HJMM})
later on. The proofs of our results are similar to those from \cite[Sec. 2,3]{Tappe-Wiener}, where analogous statements for Wiener driven SPDEs are provided. Indeed, due to the affine structure of a foliation, the L\'{e}vy process cannot jump out of the foliation. We refer the reader to \cite[Sec. 2,3]{Tappe-Wiener} for more details and explanations about invariant foliations.

From now on, let $(\Omega,\mathcal{F},(\mathcal{F}_t)_{t \geq
0},\mathbb{P})$ be a filtered probability space satisfying the usual
conditions and let $X$ be a real-valued, square-integrable L\'evy process with Gaussian
part $c \geq 0$ and L\'evy measure $F$. In order to avoid
trivialities, we assume that $c + F(\mathbb{R}) > 0$.
Here, we shall deal with SPDEs of the type
\begin{align}\label{SPDE-manifold}
\left\{
\begin{array}{rcl}
dr_t & = & (A r_t + \alpha(r_t))dt + \sigma(r_{t-})dX_t
\medskip
\\ r_0 & = & h_0
\end{array}
\right.
\end{align}
on a separable Hilbert space $H$. In (\ref{SPDE-manifold}), the operator $A : \mathcal{D}(A) \subset H \rightarrow H$ is the infinitesimal
generator of a $C_0$-semigroup $(S_t)_{t \geq 0}$ on $H$, and $\alpha, \sigma : H \rightarrow H$ are measurable mappings. We refer to \cite{P-Z-book} for general information about SPDEs driven by L\'{e}vy processes.

In what follows, let $V \subset H$ be a finite dimensional linear
subspace.

\begin{definition}
A family $(\mathcal{M}_t)_{t \geq 0}$ of affine subspaces
$\mathcal{M}_t \subset H$, $t \geq 0$ is called a \emph{foliation
generated by $V$} if there exists $\psi \in C^1(\mathbb{R}_+;H)$
such that
\begin{align*}
\mathcal{M}_t = \psi(t) + V, \quad t \geq 0.
\end{align*}
\end{definition}

In what follows, let $(\mathcal{M}_t)_{t \geq 0}$ be a foliation
generated by the subspace $V$.

\begin{definition}\label{def-inv-foliation}
The foliation $(\mathcal{M}_t)_{t \geq 0}$ is called \emph{invariant for (\ref{SPDE-manifold})} if for every $t_0 \in
\mathbb{R}_+$ and $h \in \mathcal{M}_{t_0}$ there exists a weak solution $(r_t)_{t \geq 0}$ for (\ref{SPDE-manifold}) with $r_0 = h$ having c\`{a}dl\`{a}g sample paths such that
\begin{align*}
\mathbb{P}(r_t \in \mathcal{M}_{t_0 + t}) = 1 \quad \text{for all $t
\geq 0$.}
\end{align*}
\end{definition}

The Definition \ref{def-inv-foliation} of an invariant foliation slightly deviates from that in \cite{Tappe-Wiener}, as it includes the existence of a weak solution for (\ref{SPDE-manifold}). However, the proofs of the following results are similar to that in \cite{Tappe-Wiener}.

\begin{theorem}\label{thm-foliation-1}
We suppose that the following conditions are satisfied:
\begin{itemize}
\item The foliation $(\mathcal{M}_t)_{t \geq 0}$ is invariant
for (\ref{SPDE-manifold}).

\item The mappings $\alpha$ and $\sigma$ are continuous.
\end{itemize}
Then, for all $t \geq 0$ the following conditions hold true:
\begin{align}\label{domain-pre}
\mathcal{M}_t &\subset \mathcal{D}(A),
\\ \label{nu-pre} \nu ( h ) &\in T \mathcal{M}_t, \quad h \in
\mathcal{M}_t
\\ \label{sigma-pre} \sigma ( h ) &\in V, \quad h \in
\mathcal{M}_t.
\end{align}
\end{theorem}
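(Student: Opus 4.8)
The plan is to fix $t_0 \in \mathbb{R}_+$ and $h \in \mathcal{M}_{t_0}$, invoke invariance to obtain a c\`adl\`ag weak solution $(r_t)_{t \geq 0}$ of \eqref{SPDE-manifold} with $r_0 = h$ satisfying $\mathbb{P}(r_t \in \mathcal{M}_{t_0+t}) = 1$ for all $t \geq 0$, and then to exploit the analytically weak form of the equation: for every $\zeta \in \mathcal{D}(A^*)$,
\[
\langle \zeta, r_t \rangle = \langle \zeta, h \rangle + \int_0^t \big( \langle A^*\zeta, r_s \rangle + \langle \zeta, \alpha(r_s) \rangle \big)\, ds + \int_0^t \langle \zeta, \sigma(r_{s-}) \rangle\, dX_s.
\]
The crucial observation is that for test functions \emph{transverse} to the foliation, i.e.\ $\zeta \in V^\perp \cap \mathcal{D}(A^*)$, the inclusion $r_t \in \psi(t_0+t) + V$ forces $\langle \zeta, r_t \rangle = \langle \zeta, \psi(t_0+t) \rangle$, which is a \emph{deterministic} function of $t$, of class $C^1$ because $\psi \in C^1(\mathbb{R}_+;H)$.

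First I would establish the volatility condition \eqref{sigma-pre}. For $\zeta \in V^\perp \cap \mathcal{D}(A^*)$ the left-hand side above is a continuous process of finite variation with vanishing martingale part, whereas the right-hand side is its canonical semimartingale decomposition, the stochastic integral $\int_0^t \langle \zeta, \sigma(r_{s-}) \rangle\, dX_s$ contributing the only martingale term. Writing $X = \beta t + M$ with $M$ a square-integrable martingale whose continuous and purely discontinuous parts are governed by $c$ and $F$, the standing assumption $c + F(\mathbb{R}) > 0$ guarantees that $M$ is genuinely non-degenerate; uniqueness of the canonical decomposition then forces $\langle \zeta, \sigma(r_{s-}) \rangle = 0$ for $(\mathbb{P} \otimes ds)$-almost all $(\omega,s)$. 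Combining Fubini, the right-continuity of the paths and the continuity of $\sigma$, I would let $s \downarrow 0$ along a suitable sequence to deduce $\langle \zeta, \sigma(h) \rangle = 0$. Since $V$ is finite dimensional, $V^\perp$ has finite codimension, and a dense subspace intersected with a closed subspace of finite codimension is dense in it; hence $V^\perp \cap \mathcal{D}(A^*)$ is dense in $V^\perp$ and we conclude $\sigma(h) \in (V^\perp)^\perp = V$. Performing this at every base point yields \eqref{sigma-pre}.

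Next, with the stochastic integral removed, the identity becomes $\langle \zeta, \psi(t_0+t) \rangle = \langle \zeta, h \rangle + \int_0^t ( \langle A^*\zeta, r_s \rangle + \langle \zeta, \alpha(r_s) \rangle )\, ds$ for $\zeta \in V^\perp \cap \mathcal{D}(A^*)$. Differentiating at $t=0$, using continuity of $\alpha$ and right-continuity of $r$, gives $\langle A^*\zeta, h \rangle = \langle \zeta, \psi'(t_0) - \alpha(h) \rangle$, so the functional $\zeta \mapsto \langle A^*\zeta, h \rangle$ is bounded on $V^\perp \cap \mathcal{D}(A^*)$. To upgrade this to boundedness on all of $\mathcal{D}(A^*)$---which, by the characterisation $\mathcal{D}(A) = \mathcal{D}(A^{**})$, is precisely $h \in \mathcal{D}(A)$---I would decompose an arbitrary $\zeta \in \mathcal{D}(A^*)$ as an element of $V^\perp \cap \mathcal{D}(A^*)$ plus a contribution from a fixed finite family $g_1,\dots,g_d \in \mathcal{D}(A^*)$ realising the finitely many transverse directions (their existence again following from density of $\mathcal{D}(A^*)$), for which the $\langle A^* g_j, h \rangle$ are mere constants. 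This produces an estimate $|\langle A^*\zeta, h \rangle| \leq C\|\zeta\|$ and hence \eqref{domain-pre}. Substituting $\langle A^*\zeta, h \rangle = \langle \zeta, Ah \rangle$ back into the differentiated identity gives $\langle \zeta, Ah + \alpha(h) - \psi'(t_0) \rangle = 0$ for all $\zeta \in V^\perp \cap \mathcal{D}(A^*)$, whence, by the same density argument, $Ah + \alpha(h) - \psi'(t_0) \in V$, which is exactly the drift tangency \eqref{nu-pre}.

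The hard part will be the rigorous comparison of semimartingale decompositions in the presence of jumps: one must argue that both the continuous martingale part (driven by the Gaussian part $c$) and the compensated jump part (driven by $F$) of $\int_0^t \langle \zeta, \sigma(r_{s-}) \rangle\, dX_s$ vanish, and it is exactly here that $c + F(\mathbb{R}) > 0$ is indispensable---without it $X$ could reduce to a pure drift and no information on $\sigma$ could be extracted. A secondary technical point is the transition from the $(\mathbb{P} \otimes ds)$-almost everywhere statement to the pointwise identity at $h$, which hinges on the interplay of Fubini, path right-continuity and continuity of $\sigma$, together with the finite-dimensional bookkeeping needed to promote transverse boundedness to the full domain condition. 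The overall architecture parallels the Wiener case treated in \cite[Sec. 2,3]{Tappe-Wiener}, the genuinely new ingredient being the control of the jump contribution.
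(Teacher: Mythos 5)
Your proposal is correct and follows essentially the same route as the paper, which for this theorem simply defers to the Wiener-case argument of \cite[Sec.~2,3]{Tappe-Wiener}: one tests the analytically weak formulation against $\zeta \in V^\perp \cap \mathcal{D}(A^*)$, kills the martingale part via uniqueness of the canonical decomposition of a special semimartingale together with the non-degeneracy $c + F(\mathbb{R}) > 0$ (which makes $\langle M, M\rangle_t = \big(c + \int_{\mathbb{R}} x^2 F(dx)\big)\,t$ strictly increasing), and then differentiates the remaining finite-variation identity at $t=0$ to obtain the domain and drift conditions. The only adaptation needed for the L\'evy setting is exactly the point you single out, namely that the compensated-jump contribution to the quadratic variation must vanish along with the Gaussian one.
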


In Theorem \ref{thm-foliation-1}, the mapping $\nu : \mathcal{D}(A) \rightarrow H$ is defined by $\nu := A + \alpha$, and $T \mathcal{M}_t$ denotes the tangent space of the foliation at time $t$, see \cite{Tappe-Wiener}.

\begin{theorem}\label{thm-foliation-2}
We suppose that the following conditions are satisfied:
\begin{itemize}
\item Conditions (\ref{domain-pre})--(\ref{sigma-pre}) hold true.

\item $\alpha$ and $\sigma$ are Lipschitz continuous.
\end{itemize}
Then, the foliation $(\mathcal{M}_t)_{t \geq 0}$ is invariant for (\ref{SPDE-manifold}).
\end{theorem}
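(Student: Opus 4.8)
The plan is to prove the converse direction of the previous theorem: starting from the geometric conditions \eqref{domain-pre}--\eqref{sigma-pre} together with Lipschitz continuity of $\alpha$ and $\sigma$, we must construct, for each starting point $h \in \mathcal{M}_{t_0}$, a weak solution of \eqref{SPDE-manifold} that stays in the moving affine space $\mathcal{M}_{t_0+t}$. The natural strategy is to reduce the infinite dimensional SPDE to a finite dimensional SDE living on the model space $V$. Writing $\mathcal{M}_t = \psi(t) + V$, the idea is to parametrize a candidate solution as $r_t = \psi(t_0 + t) + Y_t$ with $Y_t \in V$, so that the statement $r_t \in \mathcal{M}_{t_0+t}$ becomes automatic, and to derive from \eqref{SPDE-manifold} an evolution equation for the $V$-valued process $Y$.

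The key steps, in order, are as follows. First I would fix $t_0$ and a finite basis of $V$, and project the drift $\nu = A + \alpha$ and the volatility $\sigma$ onto $V$. Condition \eqref{sigma-pre} guarantees $\sigma(h) \in V$ on each leaf, and condition \eqref{nu-pre} guarantees $\nu(h) \in T\mathcal{M}_t$; since $\psi \in C^1$, the tangent space differs from $V$ only by the velocity $\psi'(t_0+t)$, so the drift of $Y$ lives in $V$ once we subtract the deterministic shift. This is precisely what forces the ansatz to close: formally differentiating $r_t = \psi(t_0+t)+Y_t$ and matching against \eqref{SPDE-manifold}, I would obtain an SDE
\begin{align*}
dY_t = \ell(Y_t)\,dt + \sigma(\psi(t_0+t)+Y_t)\,dX_t
\end{align*}
for a suitable $V$-valued, Lipschitz coefficient $\ell$ built from $\nu$ and $\psi'$. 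Because $V$ is finite dimensional and both coefficients are Lipschitz, standard existence and uniqueness theory for finite dimensional SDEs driven by the square-integrable L\'evy process $X$ yields a c\`adl\`ag solution $Y$ with $Y_0 = h - \psi(t_0) \in V$, and by construction $Y_t \in V$ for all $t$, hence $r_t \in \mathcal{M}_{t_0+t}$ almost surely.

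The final and most delicate step is to verify that $r_t = \psi(t_0+t)+Y_t$ is genuinely a \emph{weak} (mild/analytically weak) solution of the \emph{infinite dimensional} SPDE \eqref{SPDE-manifold}, not merely of the finite dimensional surrogate. Here I expect the main obstacle to lie: one must show that the generator term $A r_t$ is well defined and correctly reproduced. Condition \eqref{domain-pre} ensures $\mathcal{M}_t \subset \mathcal{D}(A)$, so $A r_t$ makes sense pathwise, and the decomposition $A = \nu - \alpha$ lets me re-express the finite dimensional drift in terms of the original coefficients. I would test the candidate against an arbitrary $\zeta \in \mathcal{D}(A^*)$, use that $\langle r_t, A^*\zeta\rangle = \langle A r_t, \zeta\rangle$ on the leaves, and check that the weak-formulation identity holds by comparing it, coordinate by coordinate in the chosen basis of $V$, with the integrated form of the finite dimensional SDE for $Y$; the $\psi'$ and $A\psi$ contributions must cancel against the deterministic part of $\nu$ using \eqref{nu-pre}. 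Throughout, square-integrability of $X$ and Lipschitz continuity of $\sigma$ control the stochastic integral, so the remaining work is the bookkeeping that matches the two formulations, which is routine once the ansatz and the cancellation of the drift have been set up correctly.
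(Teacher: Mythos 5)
Your proposal is correct and follows essentially the same route as the paper, which itself defers the detailed argument to the Wiener-case proofs in \cite{Tappe-Wiener}: parametrize the candidate solution as $r_t = \psi(t_0+t) + Y_t$ with $Y_t \in V$, use (\ref{nu-pre}) and (\ref{sigma-pre}) to close a finite dimensional Lipschitz SDE for $Y$ (noting that $A|_V$ is automatically bounded since $V \subset \mathcal{D}(A)$ is finite dimensional), and then verify via (\ref{domain-pre}) that the resulting leaf-valued process is a weak solution of (\ref{SPDE-manifold}). The only point worth making explicit is that the affine structure of the leaves is what lets the jumps $\Delta r_t = \sigma(r_{t-})\Delta X_t \in V$ stay inside $\mathcal{M}_{t_0+t}$, which your $V$-valued ansatz handles automatically.
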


The previous results lead to the following definition
of an affine realization:

\begin{definition}
Let $V \subset H$ be a finite dimensional subspace. 
\begin{enumerate}
\item The SPDE (\ref{SPDE-manifold}) has an {\em affine realization generated by
$V$}, if for each $h_0 \in \mathcal{D}(A)$ there exists a foliation
$(\mathcal{M}_t^{(h_0)})_{t \geq 0}$ generated by $V$ with $h_0 \in
\mathcal{M}_0^{(h_0)}$, which is invariant for (\ref{SPDE-manifold}).

\item In this case, we call $d := \dim V$ the {\em dimension} of the affine realization.

\item The SPDE (\ref{SPDE-manifold}) has an {\em affine realization}, if it has an affine realization generated by some subspace $V$.

\item An affine realization generated by some subspace $V$ is called \emph{minimal}, if for another affine realization generated by some subspace $W$ we have $V \subset W$.

\end{enumerate}
\end{definition}

\begin{lemma}\label{lemma-ddv}
Let $V \subset H$ be a finite dimensional subspace. We suppose that the following conditions are satisfied:
\begin{itemize}
\item The SPDE (\ref{SPDE-manifold}) has an affine realization generated by $V$.

\item $\alpha$ and $\sigma$ are continuous.
\end{itemize}
Then we have $\sigma(h) \in V$ for all $h \in H$.
\end{lemma}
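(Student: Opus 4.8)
The plan is to deduce the statement on the dense domain $\mathcal{D}(A)$ from Theorem \ref{thm-foliation-1}, and then to bootstrap the conclusion to all of $H$ by continuity and density.

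First I would fix an arbitrary point $h_0 \in \mathcal{D}(A)$ and invoke the definition of an affine realization generated by $V$: it supplies a foliation $(\mathcal{M}_t^{(h_0)})_{t \geq 0}$ generated by $V$, with $h_0 \in \mathcal{M}_0^{(h_0)}$, that is invariant for (\ref{SPDE-manifold}). Since $\alpha$ and $\sigma$ are continuous by assumption, this foliation satisfies the hypotheses of Theorem \ref{thm-foliation-1}, so conclusion (\ref{sigma-pre}) applies and yields $\sigma(h) \in V$ for all $h \in \mathcal{M}_t^{(h_0)}$ and all $t \geq 0$. Specializing to $t = 0$ and $h = h_0$ gives $\sigma(h_0) \in V$. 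As $h_0$ ranges over $\mathcal{D}(A)$, this establishes $\sigma(h) \in V$ for every $h \in \mathcal{D}(A)$.

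Next I would extend this from $\mathcal{D}(A)$ to all of $H$. Because $A$ is the infinitesimal generator of a $C_0$-semigroup, its domain $\mathcal{D}(A)$ is dense in $H$; and because $V$ is finite dimensional, it is a closed subspace of $H$. Thus for an arbitrary $h \in H$ I would choose a sequence $(h_n)_{n \in \mathbb{N}} \subset \mathcal{D}(A)$ with $h_n \to h$, observe that $\sigma(h_n) \in V$ for each $n$ by the previous step, and use continuity of $\sigma$ to obtain $\sigma(h_n) \to \sigma(h)$, whence $\sigma(h) \in V$ by closedness of $V$.

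The only subtlety — rather than a genuine obstacle — is noticing that the definition of an affine realization guarantees invariant foliations only through points of $\mathcal{D}(A)$, so that Theorem \ref{thm-foliation-1} constrains $\sigma$ a priori only on $\mathcal{D}(A)$; the passage to the whole space rests solely on the density of $\mathcal{D}(A)$ and the closedness of the finite dimensional subspace $V$, both of which are standard.
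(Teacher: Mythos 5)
Your proposal is correct and follows exactly the paper's own argument: apply Theorem \ref{thm-foliation-1} to the invariant foliation through each $h_0 \in \mathcal{D}(A)$ to get $\sigma(h_0) \in V$ on the domain, then extend to all of $H$ using density of $\mathcal{D}(A)$, continuity of $\sigma$, and closedness of the finite dimensional subspace $V$. No issues.
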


\begin{proof}
Using Theorem \ref{thm-foliation-1} we have $\sigma(h) \in V$ for all $h \in \mathcal{D}(A)$. Since $\sigma$ is continuous, $\mathcal{D}(A)$ is dense in $H$, and $V$ is closed, we deduce that  $\sigma(h) \in V$ for all $h \in H$.
\end{proof}

\section{Presentation of the term structure model}\label{sec-model}

In this section, we shall introduce the L\'evy term structure model. Recall that $c \geq 0$ denotes the Gaussian part and $F$ the L\'evy measure of the L\'evy process $X$. We define the domain
\begin{align*}
\mathcal{D}(\Psi) := \bigg\{ z \in \mathbb{R} : \int_{\{ | x | > 1 \}} e^{zx} F(dx) < \infty \bigg\}
\end{align*}
and the cumulant generating function
\begin{align*}
\Psi : \mathcal{D}(\Psi) \rightarrow \mathbb{R}, \quad \Psi(z) := bz + \frac{c}{2} z^2 + \int_{\mathbb{R}} \big( e^{z x} - 1 - z
x \big) F(dx),
\end{align*}
where $b \in \mathbb{R}$ denotes the drift of $X$. Note that $\Psi$ is of class $C^{\infty}$ in the interior of $\mathcal{D}(\Psi)$.
In what follows, we assume that $K \subset \mathcal{D}(\Psi)$ for some compact interval $K$ with $0 \in {\rm Int} \, K$. Then, the cumulant generating function $\Psi$ is even analytic on the interior of $K$, and thus, for some $\epsilon > 0$ we obtain the power series representation
\begin{align}\label{Taylor-series-Psi}
\Psi(z) = \sum_{n=0}^{\infty} a_n z^n, \quad z \in (-\epsilon,\epsilon)
\end{align}
where the coefficients $(a_n)$ are given by
\begin{align*}
a_n = \frac{\Psi^{(n)}(0)}{n!}, \quad n \in \mathbb{N}_0.
\end{align*}
Note that
\begin{align}\label{psi-der-2}
a_2 = \frac{1}{2} \bigg( c + \int_{\mathbb{R}} x^2 F(dx) \bigg) \quad \text{and} \quad a_n = \frac{1}{n!} \int_{\mathbb{R}} x^n F(dx) \text{ for $n \geq 3$.}
\end{align}
We fix an arbitrary constant $\beta > 0$ and denote by $H_{\beta}$ the
space of all absolutely continuous functions $h : \mathbb{R}_+
\rightarrow \mathbb{R}$ such that
\begin{align}\label{def-norm}
\| h \|_{\beta} := \bigg( |h(0)|^2 + \int_{\mathbb{R}_+} |h'(x)|^2
e^{\beta x} dx \bigg)^{1/2} < \infty.
\end{align}
Spaces of this kind have been introduced in \cite{fillnm}. We also refer to \cite[Sec. 4]{Tappe-Wiener}, where some relevant properties have been summarized. Let $H_{\beta}^0$ be the subspace
\begin{align}\label{def-subspace}
H_{\beta}^0 := \Big\{ h \in H_{\beta} : \lim_{x \rightarrow \infty} h(x) = 0 \Big\}. 
\end{align}
We fix arbitrary constants $0 < \beta < \beta'$.

\begin{definition}
Let $H_{\beta,\beta'}^{\Psi}$ be the set of all mappings $\sigma : H_{\beta} \rightarrow H_{\beta'}^0$ such that
\begin{align*}
- \int_0^x \sigma(h)(\eta) d\eta \in K \quad \text{for all $h \in H_{\beta}$ and $x \in \mathbb{R}_+$.}
\end{align*}
\end{definition}

For a volatility $\sigma \in H_{\beta,\beta'}^{\Psi}$ we define the drift $\alpha_{\rm HJM}$ according to the HJM drift condition (\ref{alpha-HJM}).

\begin{remark}
Due to Lemma \ref{lemma-ddv}, throughout this text we will deal with volatility structures of the form
\begin{align}\label{ddv-pre}
\sigma(h) = \sum_{i=1}^p \Phi_i(h) \lambda_i, \quad h \in H_{\beta}
\end{align}
with real-valued mappings $\Phi_1,\ldots,\Phi_p : H_{\beta} \rightarrow \mathbb{R}$ and functions $\lambda_1,\ldots,\lambda_p \in H_{\beta'}^0$. By \cite[Lemma 4.3]{Tappe-Wiener} we have $\Lambda_1,\ldots,\Lambda_p \in H_{\beta}$, where $\Lambda_j := \int_0^{\bullet} \lambda(\eta) d\eta$ for $j=1,\ldots,p$, and hence, these functions are bounded. Thus, any volatility $\sigma$ of the form (\ref{ddv-pre}), for which the mappings $\Phi_1,\ldots,\Phi_p$ are suitably bounded, belongs to $H_{\beta,\beta'}^{\Psi}$.
\end{remark}

We denote by $(S_t)_{t \geq 0}$ the shift-semigroup on $H_{\beta}.$
From the theory of strongly continuous semigroups (see, e.g. \cite{Pazy}) it is well-known that the domain $\mathcal{D}(d/dx)$, endowed with the graph norm
\begin{align*}
\| h \|_{\mathcal{D}(d/dx)} := \big( \| h \|_{\beta}^2 + \| (d / dx) h \|_{\beta}^2 \big)^{1/2}, \quad h \in H_{\beta}
\end{align*}
itself is a separable Hilbert space, and that
$(S_t)_{t \geq 0}$ is also a $C_0$-semigroup on $(\mathcal{D}(d/dx), \| \cdot \|_{\mathcal{D}(d/dx)})$.
Using similar techniques as in \cite[Sec. 4]{Tappe-Wiener} and \cite[Sec. 4]{Filipovic-Tappe}, we obtain the following auxiliary result.

\begin{lemma}\label{lemma-Psi-Lipschitz}
Let $\sigma \in H_{\beta,\beta'}^{\Psi}$ be arbitrary.
\begin{enumerate}
\item We have $\alpha_{\rm HJM}(h) \in H_{\beta}^0$ for all $h \in H_{\beta}$.

\item If $\sigma$ is continuous, then $\alpha_{\rm HJM}$ is continuous, too.

\item If $\sigma$ is Lipschitz continuous and bounded, then $\alpha_{\rm HJM}$ is Lipschitz continuous.

\item If $\sigma(\mathcal{D}(d/dx)) \subset \mathcal{D}(d/dx)$ and $\sigma$ is Lipschitz continuous and bounded on $(\mathcal{D}(d/dx), \| \cdot \|_{\mathcal{D}(d/dx)})$, then $\alpha_{\rm HJM}(\mathcal{D}(d/dx)) \subset \mathcal{D}(d/dx)$ and $\alpha_{\rm HJM}$ is Lipschitz continuous on $(\mathcal{D}(d/dx), \| \cdot \|_{\mathcal{D}(d/dx)})$.
\end{enumerate}
\end{lemma}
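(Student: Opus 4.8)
The plan is to reduce all four statements to estimates for the nonlinear superposition operator hidden in the drift condition (\ref{alpha-HJM}). Writing $\Sigma(h) := -\int_0^{\bullet}\sigma(h)(\eta)\,d\eta$, the second form of (\ref{alpha-HJM}) reads $\alpha_{\rm HJM}(h) = -\sigma(h)\,\Psi'(\Sigma(h))$, so that $\alpha_{\rm HJM} = G\circ\sigma$, where $G(g) := -g\,\Psi'\big(-\int_0^{\bullet} g(\eta)\,d\eta\big)$. The decisive structural input is that, because $\sigma\in H_{\beta,\beta'}^{\Psi}$, the argument $\Sigma(h)(x)$ always lies in the compact interval $K$; since $\Psi$ is analytic on a neighbourhood of $K$, all derivatives $\Psi',\Psi'',\Psi''',\dots$ are bounded on $K$, and each of $\Psi',\Psi'',\dots$ is Lipschitz there. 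I would also use the standard properties of the spaces recorded in \cite[Sec.~4]{Tappe-Wiener} and \cite[Sec.~4]{Filipovic-Tappe}: every $g\in H_{\beta'}$ is bounded with $\|g\|_{\infty}\lesssim\|g\|_{\beta'}$, and every $g\in H_{\beta'}^0$ decays as $|g(x)|\lesssim\|g\|_{\beta'}e^{-\beta' x/2}$, so that in particular $\int_0^{\infty}|g|\lesssim\|g\|_{\beta'}$. It is here, and in the requirement that products land back in $H_{\beta}$, that the strict gap $\beta<\beta'$ is used.

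For part~(1) I would differentiate, obtaining $\alpha_{\rm HJM}(h)' = -\sigma(h)'\,\Psi'(\Sigma(h)) + \sigma(h)^2\,\Psi''(\Sigma(h))$, and estimate $\|\alpha_{\rm HJM}(h)\|_{\beta}^2$ termwise. The value at $0$ is finite; the first term is controlled by $\int_0^{\infty}|\sigma(h)'|^2 e^{\beta x}\,dx\le\|\sigma(h)\|_{\beta'}^2$, using $\beta<\beta'$ together with $\sup_K|\Psi'|<\infty$; and the second by $\int_0^{\infty}|\sigma(h)|^4 e^{\beta x}\,dx<\infty$, using the exponential decay of $\sigma(h)\in H_{\beta'}^0$ and $\sup_K|\Psi''|<\infty$. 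Vanishing at infinity, namely $\alpha_{\rm HJM}(h)(x) = -\sigma(h)(x)\,\Psi'(\Sigma(h)(x))\to 0$, follows because $\sigma(h)(x)\to 0$ while $\Psi'(\Sigma(h)(x))$ stays bounded; hence $\alpha_{\rm HJM}(h)\in H_{\beta}^0$.

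Parts~(2) and~(3) I would deduce from a single claim: $G\colon H_{\beta'}^0\to H_{\beta}^0$ is Lipschitz on bounded sets. To prove it, write $G(g_1)-G(g_2) = -g_1\big[\Psi'(\Sigma(g_1))-\Psi'(\Sigma(g_2))\big] - (g_1-g_2)\Psi'(\Sigma(g_2))$, differentiate, and bound each resulting term in $\|\cdot\|_{\beta}$ by $\|g_1-g_2\|_{\beta'}$ times a constant depending only on $\|g_1\|_{\beta'}$ and $\|g_2\|_{\beta'}$. The crux is controlling the product contributions, such as the one arising from $g_1\big[\Psi'(\Sigma(g_1))-\Psi'(\Sigma(g_2))\big]'$: here one uses $|\Psi'(\Sigma(g_1))-\Psi'(\Sigma(g_2))|\le\sup_K|\Psi''|\cdot|\Sigma(g_1)-\Sigma(g_2)|\le\sup_K|\Psi''|\int_0^{\infty}|g_1-g_2|$, the analogous Lipschitz estimate for $\Psi''$ with $\sup_K|\Psi'''|$, and the decay of the $H_{\beta'}^0$-factors against the weight $e^{\beta x}$. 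Given the claim, $\alpha_{\rm HJM}=G\circ\sigma$ is continuous whenever $\sigma$ is continuous, proving~(2), since a locally Lipschitz map is continuous; and when $\sigma$ is bounded and Lipschitz its range is a bounded subset of $H_{\beta'}^0$ on which $G$ has a uniform Lipschitz constant, so the composition is Lipschitz, proving~(3). I expect this bounded-Lipschitz estimate for $G$ to be the main obstacle: keeping track of the several product terms produced by differentiation and verifying that each weighted integral converges, which rests entirely on the two features above, namely the uniform control of all derivatives of $\Psi$ over $K$ and the exponential decay from $H_{\beta'}^0$ combined with $\beta<\beta'$.

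For part~(4) the argument is the same one order higher. Differentiating once more gives $\alpha_{\rm HJM}(h)'' = -\sigma(h)''\,\Psi'(\Sigma(h)) + 3\,\sigma(h)\sigma(h)'\,\Psi''(\Sigma(h)) - \sigma(h)^3\,\Psi'''(\Sigma(h))$; assuming $\sigma(h)\in\mathcal{D}(d/dx)$ this lies in $L^2(e^{\beta x}\,dx)$ by exactly the decay-and-boundedness estimates used above, now also invoking $\sup_K|\Psi'''|<\infty$, whence $\alpha_{\rm HJM}(h)\in\mathcal{D}(d/dx)$. The Lipschitz estimate in the graph norm $\|\cdot\|_{\mathcal{D}(d/dx)}$ is then obtained exactly as in~(3), additionally using $\sup_K|\Psi^{(4)}|<\infty$ for the term carrying the increment of $\Psi'''$. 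Once the first-order estimate of parts~(2)--(3) is assembled, statements~(1) and~(4) are routine variants at zeroth and second order, respectively.
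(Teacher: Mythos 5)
Your proof is correct and follows essentially the same route as the paper, which offers no proof of its own for Lemma \ref{lemma-Psi-Lipschitz} but delegates to the techniques of \cite[Sec.~4]{Tappe-Wiener} and \cite[Sec.~4]{Filipovic-Tappe} --- precisely the reduction you carry out, namely weighted-norm estimates for the superposition operator $h \mapsto -\sigma(h)\,\Psi'(\Sigma(h))$ and its first two derivatives, based on the boundedness of $\Psi',\Psi'',\Psi''',\Psi^{(4)}$ on the compact set $K$ and the decay $|g(x)| \lesssim \|g\|_{\beta'} e^{-\beta' x/2}$ for $g \in H_{\beta'}^0$. The one hypothesis you quietly strengthen --- analyticity of $\Psi$ on a neighbourhood of $K$ rather than merely on ${\rm Int}\, K$, which is what is actually needed to bound the derivatives of $\Psi$ up to the endpoints of $K$ --- is exactly what the cited references assume, so your reading is the intended one.
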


Note that the HJMM equation (\ref{HJMM})
is a particular example of the SPDE (\ref{SPDE-manifold}) on the state space $H = H_{\beta}$ with infinitesimal generator $A = d/dx$ and $\alpha = \alpha_{\rm HJM}$. Due to Lemma \ref{lemma-Psi-Lipschitz}, we can apply all previous results about invariant foliations from Section \ref{sec-foliations} in the sequel.

\section{Necessary conditions for the existence of affine realizations}\label{sec-nec}

In this section, we shall derive necessary conditions for the existence 
of affine realizations for L\'{e}vy term structure models. 

Throughout this section, we assume that the HJMM equation (\ref{HJMM}) has an affine realization generated by some subspace $V \subset H_{\beta}$. 
We suppose that the volatility $\sigma \in H_{\beta,\beta'}^{\Psi}$ is continuous. According to Lemma \ref{lemma-Psi-Lipschitz}, the drift $\alpha_{\rm HJM}$ is continuous, too.
Recall that $F$ denotes the L\'{e}vy measure of the driving L\'{e}vy process $X$ in (\ref{HJMM}). We suppose there exists an index $n_0 \in \mathbb{N}$ such that
\begin{align}\label{cond-f-n0}
\int_{\mathbb{R}} x^n F(dx) \neq 0 \quad \text{for all $n \geq n_0$,}
\end{align}
and we suppose that for each $\lambda \in V$ with $\lambda \neq 0$ we have
\begin{align}\label{cond-lambda-lin}
\lambda|_{[0,\kappa]} \not\equiv 0 \quad \text{for all $\kappa > 0$.}
\end{align}
We fix an arbitrary $h_0 \in \mathcal{D}(d/dx)$ and define the linear space $W := \langle \sigma(h_0 + V) \rangle$. Recall that a function $v \in \mathcal{D}((d/dx)^{\infty})$ is called \emph{quasi-exponential}, if
\begin{align*}
\dim \langle (d/dx)^n v : n \in \mathbb{N}_0 \rangle < \infty.
\end{align*}

\begin{theorem}\label{thm-ddv-pre}
The following statements are true:
\begin{enumerate}
\item We have $\sigma(H_{\beta}) \subset V$.

\item For every subspace $U \subset W$ with $\dim U \geq 1$ and each set $Y \subset \sigma(h_0 + V)$ with $Y \cap U \neq \emptyset$, the set $Y \cap U$ cannot be open in $U$.

\item If $\sigma$ is constant on $h_0 + V$, then each $v \in V$ is quasi-exponential, and we have $\langle (d/dx)^n v : n \in \mathbb{N}_0 \rangle \subset V$.
\end{enumerate}
\end{theorem}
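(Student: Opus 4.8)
The common engine for all three parts is the tangency statement of Theorem~\ref{thm-foliation-1}. Because the HJMM equation has an affine realization generated by $V$ and both $\sigma$ and $\alpha_{\rm HJM}$ are continuous (Lemma~\ref{lemma-Psi-Lipschitz}), for the fixed $h_0 \in \mathcal{D}(d/dx)$ there is an invariant foliation with $\mathcal{M}_0 = h_0 + V \subset \mathcal{D}(d/dx)$, and Theorem~\ref{thm-foliation-1} yields
\begin{equation*}
\frac{d}{dx}h + \alpha_{\rm HJM}(h) \in V \quad \text{for all } h \in h_0 + V. \tag{$\star$}
\end{equation*}
Statement~(1) needs no more than Lemma~\ref{lemma-ddv}, which applies verbatim and gives $\sigma(H_\beta) \subset V$; in particular $W \subset V$. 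Writing $a(\lambda) := \frac{d}{dx}\Psi(-\int_0^{\bullet}\lambda)$, the drift condition (\ref{alpha-HJM}) reads $\alpha_{\rm HJM}(h) = a(\sigma(h))$, so the drift depends on $h$ only through $\sigma(h)$; this is what makes $(\star)$ usable.

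For statement~(3), suppose $\sigma \equiv \lambda^{\ast}$ on $h_0 + V$; then $\alpha_{\rm HJM} \equiv a(\lambda^{\ast})$ is constant there as well. Substituting $h = h_0$ and $h = h_0 + v$ with $v \in V$ into $(\star)$ and subtracting cancels both the constant drift and $\frac{d}{dx}h_0$, leaving $\frac{d}{dx}v \in V$. Hence $\frac{d}{dx}V \subset V$, and an easy induction gives $(d/dx)^{n}V \subset V$ for all $n$; in particular $V \subset \mathcal{D}((d/dx)^{\infty})$ and $\langle (d/dx)^{n}v : n \in \mathbb{N}_0 \rangle \subset V$ for each $v \in V$, so every $v \in V$ is quasi-exponential. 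This part uses neither (\ref{cond-f-n0}) nor (\ref{cond-lambda-lin}).

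Statement~(2) is the heart of the matter and I would argue by contradiction. One checks that the claim is equivalent to the assertion that $\sigma(h_0 + V) \cap U$ has empty interior in $U$ for every subspace $U \subset W$ with $\dim U \geq 1$. If this failed, $\sigma(h_0 + V)$ would contain a segment $\lambda_s := \lambda_0 + s\mu$, $|s| < \delta$, with $\lambda_0 \in U$ and $0 \neq \mu \in U \subset W \subset V$. Choosing $v_s \in V$ with $\sigma(h_0 + v_s) = \lambda_s$ and inserting $h = h_0 + v_s$ into $(\star)$ gives $\frac{d}{dx}v_s + a(\lambda_s) \equiv -\frac{d}{dx}h_0 \pmod V$, so
\[
a(\lambda_s) \in -\frac{d}{dx}h_0 + \big( V + \tfrac{d}{dx}V \big) =: \mathcal{A}, \qquad |s| < \delta,
\]
an affine space whose finite dimensional direction space I denote $\widetilde V := V + \frac{d}{dx}V$. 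With $M := \int_0^{\bullet}\mu$ and $\Lambda_0 := \int_0^{\bullet}\lambda_0$, expanding $\Psi$ in a Taylor series about $-\Lambda_0$ gives (convergence in $H_\beta$ being supplied by Appendix~\ref{app-series})
\[
a(\lambda_s) = \sum_{j=0}^{\infty} s^{j}\,\frac{d}{dx}q_j, \qquad q_j := \frac{(-1)^{j}}{j!}\,M^{j}\,\Psi^{(j)}(-\Lambda_0).
\]
Testing $a(\lambda_s) - a(\lambda_0) \in \widetilde V$ against continuous functionals annihilating the closed space $\widetilde V$ and invoking uniqueness of power-series coefficients forces $\frac{d}{dx}q_j \in \widetilde V$ for all $j$; since $q_j(0) = 0$ and hence $q_j = \int_0^{\bullet} \frac{d}{dx}q_j$, the span $\langle q_j : j \geq 1 \rangle$ is finite dimensional.

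It remains to contradict this finiteness, and here the two standing hypotheses enter. Condition (\ref{cond-f-n0}) yields $a_j \neq 0$ for all $j \geq n_1 := \max\{n_0,3\}$, while condition (\ref{cond-lambda-lin}) applied to $\mu$ guarantees $M \not\equiv 0$ on each $[0,\kappa]$, hence a sequence $x_n \downarrow 0$ with $M(x_n) \neq 0$ and $-\Lambda_0(x_n)$ eventually in the interior of $K$. I then claim $q_{j_1},\dots,q_{j_m}$ are independent whenever $n_1 \leq j_1 < \dots < j_m$: in a relation $\sum_i b_i q_{j_i} = 0$, evaluate at $x_n$, divide by $M(x_n)^{j_1}$, and let $n \to \infty$; since $\Psi^{(j_i)}(-\Lambda_0(x_n)) \to j_i!\,a_{j_i}$ and $M(x_n)^{j_i - j_1} \to 0$ for $i \geq 2$, only the leading term survives, forcing $b_1 a_{j_1} = 0$ and thus $b_1 = 0$, and iterating annihilates every $b_i$. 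Consequently $\{q_j : j \geq n_1\}$ is infinite and linearly independent, contradicting the finite dimensionality just obtained. I expect this independence argument to be the main obstacle: it is exactly where the nonvanishing of the higher cumulants (\ref{cond-f-n0}) and the local nondegeneracy (\ref{cond-lambda-lin}) of $\mu$ are indispensable, and it relies on the careful term-by-term handling of the $H_\beta$-valued power series from Appendix~\ref{app-series}.
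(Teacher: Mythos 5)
Your parts (1) and (3) are correct and coincide with the paper's treatment: (1) is exactly Lemma \ref{lemma-ddv}, and (3) follows from the tangency relation by the same cancellation the paper performs (it records the relation as (\ref{eqn-lin-ind}) and derives (3) as a direct consequence). For part (2) you take a genuinely different route: where the paper samples the segment at $m$ distinct parameters $\theta_1,\ldots,\theta_m$, expands the scalar function $\sum_i \xi_i \Psi(\theta_i y + z)$ as a two-variable power series around the origin, and kills the coefficients with a Vandermonde matrix after evaluating along a sequence $x_n \downarrow 0$, you Taylor-expand the drift in the segment parameter $s$ and show that infinitely many of the coefficient functions $q_j$ are linearly independent. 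Your independence argument for the $q_j$ (dividing by $M(x_n)^{j_1}$ and letting $x_n \downarrow 0$) is sound and uses (\ref{cond-f-n0}) and (\ref{cond-lambda-lin}) in essentially the same way the paper does; it even avoids the paper's somewhat cavalier reduction to ``$a_n \neq 0$ for all $n$''.

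The gap is the step where you pass from $a(\lambda_s)-a(\lambda_0)\in\widetilde V$ for all $|s|<\delta$ to $\frac{d}{dx}q_j\in\widetilde V$ for all $j\geq 1$. This requires the series $\sum_{j\geq 1}s^j\frac{d}{dx}q_j$ to converge to $a(\lambda_s)-a(\lambda_0)$ in $H_{\beta}$ (or at least weakly), so that a continuous functional annihilating $\widetilde V$ can be applied term by term and the identity theorem for scalar power series in $s$ invoked. Appendix \ref{app-series}, which you cite for this, only concerns real-valued power series in several real variables; it says nothing about analyticity of the $H_{\beta}$-valued map $s\mapsto \frac{d}{dx}\Psi(-\Lambda_0-sM)$. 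Establishing that would require uniform-in-$j$ control of $\| \frac{d}{dx}\big(M^j\Psi^{(j)}(-\Lambda_0)\big)\|_{\beta}$ over all of $\mathbb{R}_+$, including points $x$ at which $-\Lambda_0(x)$ may sit near the boundary of $K$, where $\Psi$ is only assumed finite, not analytic. The paper's construction is designed precisely to sidestep this: the relation $\sum_i\xi_i\Psi(\theta_i\Lambda_1+\Lambda_2)=0$ is an exact identity in $H_{\beta}$ extracted from the finite dimensionality of $V^{\Psi}$ (Remark \ref{remark-U}), and all power-series analysis is then performed pointwise, at arguments $(\Lambda_1(x_n),\Lambda_2(x_n))\to(0,0)$ lying inside the region of convergence coming from (\ref{Taylor-series-Psi}). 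To repair your argument you would either have to prove the $H_{\beta}$-analyticity just described, or replace the Taylor expansion in $s$ by the paper's finite sampling of the segment.
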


\begin{remark}
The relation $\sigma(H_{\beta}) \subset V$ implies that the volatility $\sigma$ is of the form
\begin{align*}
\sigma(h) = \sum_{i=1}^p \Phi_i(h) \lambda_i, \quad h \in H_{\beta}
\end{align*}
with real-valued mappings $\Phi_1,\ldots,\Phi_p : H_{\beta} \rightarrow \mathbb{R}$ and functions $\lambda_1,\ldots,\lambda_p \in H_{\beta'}^0$. Theorem \ref{thm-ddv-pre} shows that we obtain restrictions on the mappings $\Phi_1,\ldots,\Phi_p$, which mean that their range cannot be arbitrarily rich. Such restrictions do not occur in the Wiener driven case, see, e.g. \cite{Bj_Sv, Bj_La, Filipovic, Filipovic-Teichmann-royal, Tappe-Wiener}.
\end{remark}

Before we start with the proof of Theorem \ref{thm-ddv-pre}, we shall derive some immediate consequences. If the volatility $\sigma$ is locally linear, then it vanishes. More precisely:

\begin{corollary}
Suppose there exist a linear operator $S \in L(V,W)$ and a nonempty open subset $O \subset V$ such that $\sigma(h_0 + v) = Sv$ for all $v \in O$. Then we have $S = 0$.
\end{corollary}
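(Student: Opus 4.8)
The plan is to argue by contradiction, invoking part~(2) of Theorem~\ref{thm-ddv-pre}. Suppose that $S \neq 0$, and set $U := S(V)$, the range of $S$. Since $S \neq 0$, the subspace $U \subset W$ satisfies $\dim U \geq 1$, so $U$ is an admissible subspace for Theorem~\ref{thm-ddv-pre}(2). The defining hypothesis $\sigma(h_0 + v) = Sv$ for $v \in O$ yields
\[
\sigma(h_0 + O) = S(O) \subset \sigma(h_0 + V),
\]
so the natural candidate for the forbidden configuration is the set $Y := S(O)$.

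The key step is to show that $Y = S(O)$ is open in $U$. Regarding $S$ as a surjective linear map $S : V \to U$ between finite-dimensional normed spaces, it is an open map: it factors as the quotient map $V \to V / \ker S$, which is open, followed by the induced linear isomorphism $V/\ker S \xrightarrow{\sim} U$, which is a homeomorphism. Hence the image of the nonempty open set $O \subset V$ under $S$ is a nonempty open subset of $U$. In particular $Y \subset \sigma(h_0 + V)$, and since $Y \subset U$ we have $Y \cap U = Y \neq \emptyset$ with $Y \cap U = Y$ open in $U$.

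This is exactly the situation excluded by Theorem~\ref{thm-ddv-pre}(2): we have exhibited a subspace $U \subset W$ with $\dim U \geq 1$ and a set $Y \subset \sigma(h_0 + V)$ with $Y \cap U \neq \emptyset$ for which $Y \cap U$ is open in $U$, a contradiction. Therefore $S = 0$. The only substantive point, and the step I would take the most care over, is the openness of the linear surjection $S : V \to U$ (which crucially uses finite-dimensionality of $V$); the remaining work is bookkeeping with the span $W = \langle \sigma(h_0 + V) \rangle$ and the inclusion $S(O) \subset \sigma(h_0 + V)$.
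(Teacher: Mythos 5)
Your argument is correct and is essentially the paper's own proof: both take $Y := \sigma(h_0+O) = S(O)$, observe that it is a nonempty open subset of the range of $S$ (the paper cites the open mapping theorem where you give the explicit finite-dimensional factorization through $V/\ker S$), and conclude $S=0$ from Theorem \ref{thm-ddv-pre}(2). No gaps.
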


\begin{proof}
Setting $Y := \sigma(h_0+O) = S(O) \subset {\rm ran} \, S$, we have $Y \subset \sigma(h_0 + V)$ and, by the open mapping theorem, the range $Y$ is open in ${\rm ran} \, S$. Using Theorem \ref{thm-ddv-pre}, it follows that $S = 0$.
\end{proof}

The next corollary concerns the case of constant direction volatility:

\begin{corollary}\label{cor-ddv-pre}
If $\dim W = 1$, then $\sigma$ is constant on $h_0 + V$.
\end{corollary}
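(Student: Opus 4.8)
The plan is to reduce the statement to Theorem \ref{thm-ddv-pre}(2), applied with $U = W$ and a suitably chosen subset $Y$. Since $\dim W = 1$, I would fix a nonzero vector $\lambda \in W$. As $\sigma(h_0 + v) \in W$ for every $v \in V$, one may write $\sigma(h_0 + v) = \phi(v)\,\lambda$ with a real-valued function $\phi : V \to \mathbb{R}$. Because $v \mapsto h_0 + v$ is affine, $\sigma$ is continuous, and extracting the $\lambda$-coefficient is a continuous linear functional on the finite-dimensional space $W$, the map $\phi$ is continuous as well. The goal is then precisely to show that $\phi$ is constant.

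First I would exploit the geometry of $V$: being a finite-dimensional linear space, $V$ is convex and hence connected, so its continuous image $I := \phi(V)$ is a connected subset of $\mathbb{R}$, i.e.\ an interval. Identifying $W$ with $\mathbb{R}$ through the linear homeomorphism $t \mapsto t\lambda$, the set $\sigma(h_0 + V)$ corresponds precisely to $I$.

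Next I would argue by contradiction, supposing that $\phi$ is not constant. Then $I$ contains at least two distinct points and is therefore a non-degenerate interval, so its interior $\mathrm{Int}\,I$ is a nonempty open subset of $\mathbb{R}$ contained in $I$. Here the crucial move is to use the freedom in Theorem \ref{thm-ddv-pre}(2) to pass from the full image to a subset: I set $Y := \{ t\lambda : t \in \mathrm{Int}\,I \}$. Then $Y \subset \sigma(h_0 + V)$, and taking $U := W$ gives $Y \cap U = Y \neq \emptyset$; moreover, under the identification $W \cong \mathbb{R}$ the set $Y$ corresponds to $\mathrm{Int}\,I$, which is open, so $Y \cap U = Y$ is open in $U$. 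This contradicts Theorem \ref{thm-ddv-pre}(2), whence $\phi$ must be constant, which is exactly the assertion that $\sigma$ is constant on $h_0 + V$.

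The argument is short, and the only delicate point is recognizing that $I$ itself need not be open (it could be a closed or half-open interval), so one cannot apply Theorem \ref{thm-ddv-pre}(2) to $Y = \sigma(h_0 + V)$ directly. The essential maneuver is instead to test the theorem against the open interior $\mathrm{Int}\,I$, which the statement permits precisely because there $Y$ is allowed to range over arbitrary subsets of $\sigma(h_0 + V)$.
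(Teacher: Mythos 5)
Your argument is correct and is essentially the paper's own proof: the paper likewise picks a generator $\lambda$ of $W$, uses continuity of $\sigma$ (via the intermediate value property on the connected set $V$) to show that two distinct values $a\lambda, b\lambda$ in the image force the whole open segment $\{\theta\lambda : \theta \in (a,b)\}$ to lie in $\sigma(h_0+V)$, and then contradicts Theorem \ref{thm-ddv-pre}(2) with $U = W$. Your phrasing via the interior of the interval $I = \phi(V)$ is just a cosmetic variant of the same maneuver.
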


\begin{proof}
There exists $\lambda \in W$ with $W = \langle \lambda \rangle$. Suppose that $\sigma$ is not constant on $h_0 + V$. Then, there exist $a,b \in \mathbb{R}$ with $a < b$ and $a \lambda, b \lambda \in \sigma(h_0+V)$. The set $Y := \{ \theta \lambda : \theta \in (a,b) \} \subset W$ is open in $W$, and by the continuity of $\sigma$ we obtain $Y \subset \sigma(h_0+V)$,
which contradicts Theorem \ref{thm-ddv-pre}.
\end{proof}

\begin{remark}
The assumption $\dim W = 1$ implies that on $h_0 + V$ the volatility $\sigma$ is of the form $\sigma(h) = \Phi(h) \lambda$ with a real-valued mapping $\Phi$ and a function $\lambda \in H_{\beta'}^0$, whence we speak about constant direction volatility. As we shall see in Section \ref{sec-ddv}, in this particular situation we can replace (\ref{cond-f-n0}) by the weaker condition $F(\mathbb{R}) \neq 0$, and condition (\ref{cond-lambda-lin}) can be skipped.
\end{remark}

Our goal for the rest of this section is the proof of Theorem \ref{thm-ddv-pre}. The first statement of Theorem \ref{thm-ddv-pre} immediately follows from Lemma \ref{lemma-ddv}. According to \cite[Lemma 4.3]{Tappe-Wiener}, the integral operator
\begin{align*}
T : H_{\beta'}^0 \rightarrow H_{\beta}, \quad T\lambda := -\int_0^{\bullet} \lambda(\eta) d\eta
\end{align*}
is a bounded linear operator, and it is injective. We define the mapping $\Sigma := T \circ \sigma : H_{\beta} \rightarrow H_{\beta}$.

\begin{lemma}
We have $V \subset \mathcal{D}(d/dx)$, and there exists $g_0 \in H_{\beta}$ such that
\begin{align}\label{eqn-lin-ind}
\frac{d}{dx} v + \frac{d}{dx} \Psi ( \Sigma(h_0 + v) ) + g_0 \in V \quad \text{for all $v \in V$.}
\end{align}
\end{lemma}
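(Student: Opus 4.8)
The plan is to feed the standing hypothesis of this section straight into Theorem~\ref{thm-foliation-1}. Since the HJMM equation~(\ref{HJMM}) has an affine realization generated by $V$ and $h_0 \in \mathcal{D}(d/dx)$, there is a foliation $(\mathcal{M}_t^{(h_0)})_{t \geq 0}$ generated by $V$, invariant for~(\ref{HJMM}), with $h_0 \in \mathcal{M}_0^{(h_0)}$. Writing $\mathcal{M}_0^{(h_0)} = \psi(0) + V$ with $\psi \in C^1(\mathbb{R}_+;H_{\beta})$, the fact that $h_0$ lies in this affine space forces $\mathcal{M}_0^{(h_0)} = h_0 + V$, so that $h_0 + v$ sweeps out the whole leaf as $v$ ranges over $V$. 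The volatility $\sigma$ is continuous by assumption and $\alpha_{\rm HJM}$ is continuous by Lemma~\ref{lemma-Psi-Lipschitz}, hence the hypotheses of Theorem~\ref{thm-foliation-1} are satisfied.

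Applying Theorem~\ref{thm-foliation-1} at time $t = 0$ then does almost all of the work. The domain condition~(\ref{domain-pre}) gives $h_0 + V = \mathcal{M}_0^{(h_0)} \subset \mathcal{D}(d/dx)$; since $h_0 \in \mathcal{D}(d/dx)$ and the domain is a linear subspace, subtracting $h_0$ yields $V \subset \mathcal{D}(d/dx)$, which is the first assertion. The tangentiality condition~(\ref{nu-pre}) gives $\nu(h_0 + v) \in T\mathcal{M}_0^{(h_0)}$ for every $v \in V$, where $\nu = d/dx + \alpha_{\rm HJM}$ and the tangent space $T\mathcal{M}_0^{(h_0)}$ is the affine translate of $V$ by the velocity $\psi'(0) \in H_{\beta}$ of the foliation.

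It remains to rewrite this inclusion in the stated form. From the HJM drift condition~(\ref{alpha-HJM}) together with the definition $\Sigma = T \circ \sigma$, that is $\Sigma(h) = -\int_0^{\bullet} \sigma(h)(\eta)\,d\eta$, one has $\alpha_{\rm HJM}(h) = \frac{d}{dx}\Psi(\Sigma(h))$ for every $h \in H_{\beta}$. Therefore $\nu(h_0 + v) = \frac{d}{dx}h_0 + \frac{d}{dx}v + \frac{d}{dx}\Psi(\Sigma(h_0 + v))$, and the tangentiality condition reads $\frac{d}{dx}v + \frac{d}{dx}\Psi(\Sigma(h_0+v)) + \big( \frac{d}{dx}h_0 - \psi'(0) \big) \in V$ for all $v \in V$. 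Setting $g_0 := \frac{d}{dx}h_0 - \psi'(0)$ then produces exactly~(\ref{eqn-lin-ind}); this $g_0$ lies in $H_{\beta}$ because $h_0 \in \mathcal{D}(d/dx)$ gives $\frac{d}{dx}h_0 \in H_{\beta}$, while $\psi \in C^1(\mathbb{R}_+;H_{\beta})$ gives $\psi'(0) \in H_{\beta}$.

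Since the substantive machinery is carried entirely by Theorem~\ref{thm-foliation-1}, I expect no serious obstacle here. The only points demanding genuine care are the bookkeeping ones just flagged: identifying the leaf through $h_0$ as $h_0 + V$, reading off the tangent space as an affine translate of $V$ so that the foliation velocity $\psi'(0)$ can be absorbed into the single constant $g_0$, and verifying that this $g_0$ indeed belongs to $H_{\beta}$.
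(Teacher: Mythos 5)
Your proof is correct and follows essentially the same route as the paper: apply Theorem~\ref{thm-foliation-1} to the invariant foliation through $h_0$, read off $V \subset \mathcal{D}(d/dx)$ and the tangency condition $\nu(h_0+v) \in \psi'(0)+V$, and insert the HJM drift condition to obtain~(\ref{eqn-lin-ind}). The only cosmetic difference is that the paper also invokes the foliation $(\mathcal{M}_t^{(0)})_{t\ge 0}$ through $0$ to get $V \subset \mathcal{D}(d/dx)$ directly, whereas you obtain it by subtracting $h_0 \in \mathcal{D}(d/dx)$ from the leaf $h_0+V$; both are fine.
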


\begin{proof}
We apply Theorem \ref{thm-foliation-1} to the invariant foliations $(\mathcal{M}_t^{(0)})_{t \geq 0}$ and
$(\mathcal{M}_t^{(h_0)})_{t \geq 0}$, implying $V \subset \mathcal{D}(d/dx)$ and the existence of some $h_0' \in H_{\beta}$ such that
\begin{align}\label{nu-one-dim}
\nu ( h_0 + v ) + h_0' \in V \quad \text{for all $v \in V$.}
\end{align}
Inserting the HJM drift condition (\ref{alpha-HJM}) into (\ref{nu-one-dim}), gives us relation (\ref{eqn-lin-ind}).
\end{proof}

Now, the third statement of Theorem \ref{thm-ddv-pre} is a direct consequence of relation (\ref{eqn-lin-ind}).

\begin{remark}\label{remark-U}
Integrating (\ref{eqn-lin-ind}), we see that the linear space
\begin{align}\label{def-V-h0}
V^{\Psi} := \langle \Psi ( \Sigma(h_0 + v) ) : v \in V \rangle
\end{align}
must necessarily be finite dimensional. In the present situation, by (\ref{Taylor-series-Psi}), (\ref{psi-der-2}) the cumulant generating function $\Psi$ is no polynomial, and hence, this condition is difficult to ensure without $\sigma$ being constant on $h_0 + V$.
\end{remark}

Note that for the proof of Theorem \ref{thm-ddv-pre} we have not used conditions (\ref{cond-f-n0}), (\ref{cond-lambda-lin}) up to this point.
We shall now prove the second statement of Theorem \ref{thm-ddv-pre}. In the sequel, for $z_0 \in \mathbb{R}^n$ and $\delta > 0$ we denote by $B_{\delta}(z_0)$ the open ball
\begin{align*}
B_{\delta}(z_0) := \{ z \in \mathbb{R}^n : \| z - z_0 \|_{\mathbb{R}^n} < \delta \}.
\end{align*}

\begin{proof}
(of the second statement of Theorem \ref{thm-ddv-pre}) Suppose there are a subspace $U \subset W$ with $\dim U \geq 1$ and a set $Y \subset \sigma(h_0 + V)$ with $Y \cap U \neq \emptyset$ such that $Y \cap U$ is open in $U$.
We will derive the contradiction
\begin{align}\label{U-contradiction}
\dim V^{\Psi} = \infty.
\end{align}
In order to prove (\ref{U-contradiction}), by virtue of (\ref{Taylor-series-Psi}), (\ref{psi-der-2}) and (\ref{cond-f-n0}) we may assume that $a_n \neq 0$ for all $n \in \mathbb{N}$. We set $E := T(U)$. Since $T$ is injective, we have $\dim E = \dim U \geq 1$. By the open mapping theorem, the set $T(Y \cap U)$ is open in $E$. Since $T(Y \cap U) \subset \Sigma(h_0 + V)$, there exist a direct sum decomposition $E = E_1 \oplus E_2$ with $\dim E_1 \geq 1$, elements $\Lambda_1 \in E_1$, $\Lambda_2 \in E_2$ with $\Lambda_1 \neq 0$, and constants $a,b \in \mathbb{R}$ with $a < b$ such that
\begin{align}\label{choose-from-range}
\theta \Lambda_1 + \Lambda_2 \in \Sigma(h_0 + V) \quad \text{for all $\theta \in (a,b)$.}
\end{align}
Now, let $m \in \mathbb{N}$ be arbitrary. By (\ref{choose-from-range}) there exist $\theta_1,\ldots,\theta_m \in (a,b)$ with $\theta_i \neq \theta_j$ for $i \neq j$ such that
\begin{align*}
\theta_i \Lambda_1 + \Lambda_2 \in \Sigma(h_0 + V) \quad \text{for all $i=1,\ldots,m$.}
\end{align*}
We will show that
\begin{align}\label{dim-equal-m}
\dim \langle \Psi(\theta_i \Lambda_1 + \Lambda_2) : i=1,\ldots,m \rangle = m.
\end{align}
Indeed, let $\xi_1,\ldots,\xi_m \in \mathbb{R}$ be such that
\begin{align}\label{linear-comb}
\sum_{i=1}^m \xi_i \Psi( \theta_i \Lambda_1 + \Lambda_2 ) = 0.
\end{align}
By the power series representation (\ref{Taylor-series-Psi}) there exists $\eta > 0$ such that for all $(y,z) \in B_{\eta}(0)$ we obtain
\begin{align*}
&\sum_{i=1}^m \xi_i \Psi(\theta_i y + z) = \sum_{i=1}^m \xi_i \sum_{n=0}^{\infty} a_n ( \theta_i y + z )^n
\\ &= \sum_{n=0}^{\infty} a_n \sum_{i=1}^m \xi_i ( \theta_i y + z )^n = \sum_{n=0}^{\infty} a_n \sum_{i=1}^m \xi_i \sum_{\genfrac{}{}{0pt}{}{k,l \in \mathbb{N}_0}{k+l=n}} (\theta_i y)^k z^l
\\ &= \sum_{n=0}^{\infty} a_n \sum_{\genfrac{}{}{0pt}{}{k,l \in \mathbb{N}_0}{k+l=n}} \bigg( \sum_{i=1}^m \xi_i \theta_i^k \bigg) y^k z^l.
\end{align*}
Hence, defining the coefficients
\begin{align}\label{def-c-k-l}
c_{(k,l)} := a_{k+l} \sum_{i=1}^m \xi_i \theta_i^k, \quad (k,l) \in \mathbb{N}_0^2,
\end{align}
there is a bijection $\pi : \mathbb{N}_0 \rightarrow \mathbb{N}_0^2$ such that the power series
\begin{align}\label{series-k-l}
\sum_{\genfrac{}{}{0pt}{}{n=0}{(k,l) = \pi(n)}}^{\infty} c_{(k,l)} y^k z^l
\end{align}
converges for all $(y,z) \in B_{\eta}(0)$. According to Proposition \ref{prop-power-series}, there exists $r > 0$ such that the power series (\ref{series-k-l}) converges absolutely and uniformly on $K_r(0)$ -- which denotes the compact ball defined in (\ref{comp-ball}) -- to a continuous function
\begin{align*}
f : K_r(0) \rightarrow \mathbb{R}, \quad f(y,z) = \sum_{(k,l) \in \mathbb{N}_0^2} c_{(k,l)} y^k z^l.
\end{align*}
We claim that
\begin{align}\label{all-c-k0-zero}
c_{(k,0)} = 0 \quad \text{for all $k \in \mathbb{N}_0$.}
\end{align}
Indeed, suppose that (\ref{all-c-k0-zero}) is not satisfied.
Then, there exists $k_0 \in \mathbb{N}_0$ such that $c_{(k_0,0)} \neq 0$ and $c_{(k,0)} = 0$ for $k < k_0$. Since $a_n \neq 0$ for all $n \in \mathbb{N}_0$, by (\ref{def-c-k-l}) for all $k < k_0$ and $l \in \mathbb{N}_0$ we obtain
\begin{align*}
c_{(k,l)} = a_{k+l} \sum_{i=1}^m \xi_i \theta_i^k = \frac{a_{k+l}}{a_{k}} c_{(k,0)} = 0.
\end{align*}
Since the power series (\ref{series-k-l}) converges absolutely for all $(y,z) \in B_{\eta}(0)$, we deduce that for some bijection $\tau : \mathbb{N}_0 \rightarrow \mathbb{N}_0^2$ the power series
\begin{equation}\label{series-k-l-2}
\begin{aligned}
\sum_{\genfrac{}{}{0pt}{}{n=0}{(k,l) = \tau(n)}}^{\infty} c_{(k_0+k,l)} y^k z^l &= \frac{1}{y^{k_0}} \sum_{\genfrac{}{}{0pt}{}{n=0}{(k,l) = \tau(n)}}^{\infty} c_{(k_0+k,l)} y^{k_0+k} z^l 
\\ &= \frac{1}{y^{k_0}} \sum_{\genfrac{}{}{0pt}{}{n=0}{(k,l) = \pi(n)}}^{\infty} c_{(k,l)} y^k z^l
\end{aligned}
\end{equation}
also converges for all $(y,z) \in B_{\eta}(0)$ with $y \neq 0$.
According to Proposition \ref{prop-power-series}, the power series (\ref{series-k-l-2}) converges absolutely and uniformly on $K_r(0)$ to a continuous function
\begin{align*}
g : K_r(0) \rightarrow \mathbb{R}, \quad g(y,z) = \sum_{(k,l) \in \mathbb{N}_0^2} c_{(k_0+k,l)} y^k z^l.
\end{align*}
Moreover, for each $(y,z) \in K_r(0)$ with $y \neq 0$ we have
\begin{align*}
g(y,z) = \frac{f(y,z)}{y^{k_0}}.
\end{align*}
Setting $\Lambda := (\Lambda_1,\Lambda_2)$, by (\ref{linear-comb}) we have
\begin{align}\label{f-zero}
f(y,z) = 0 \quad \text{for all $(y,z) \in \Lambda(\mathbb{R}_+) \cap K_r(0)$.}
\end{align}
Since $\Lambda_1 \neq 0$, by (\ref{cond-lambda-lin}) there exists a sequence $(x_n)_{n \in \mathbb{N}} \subset (0,\infty)$ with $x_n \rightarrow 0$ and $\Lambda_1(x_n) \neq 0$ for all $n \in \mathbb{N}$.
Since $\Lambda$ is continuous with $\Lambda(0) = (0,0)$, setting $(y_n,z_n) := \Lambda(x_n)$, $n \in \mathbb{N}$, we have $(y_n,z_n) \rightarrow (0,0)$. 
By (\ref{f-zero}) we obtain the contradiction
\begin{align*}
c_{(k_0,0)} = g(0,0) = \lim_{n \rightarrow \infty} g(y_n,z_n) = \lim_{n \rightarrow \infty} \frac{f(y_n,z_n)}{y_n^{k_0}} = 0.
\end{align*}
Consequently, we have (\ref{all-c-k0-zero}). Since $a_n \neq 0$ for all $n \in \mathbb{N}$, by the Definition (\ref{def-c-k-l}) we get
\begin{align*}
\sum_{i=1}^m \xi_i \theta_i^{k} = 0 \quad \text{for all $k \in \mathbb{N}_0$.}
\end{align*}
It follows that
$B \xi = 0$, where $B \in \mathbb{R}^{m \times m}$ denotes the Vandermonde matrix $B_{ki} = \theta_i^k$ for $k = 0,\ldots,m-1$ and $i = 1,\ldots,m$. Since $\theta_i \neq \theta_j$ for $i \neq j$, we deduce that $\xi_1 = \ldots = \xi_m = 0$, which proves (\ref{dim-equal-m}). Since $m \in \mathbb{N}$ was arbitrary, we obtain (\ref{U-contradiction}), which contradicts Remark \ref{remark-U}. This completes the proof of Theorem \ref{thm-ddv-pre}.
\end{proof}

\section{Sufficient conditions for the existence of affine realizations}\label{sec-suff}

In this section, we shall derive sufficient conditions for the existence 
of affine realizations for L\'{e}vy term structure models.

We suppose that the volatility $\sigma \in H_{\beta,\beta'}^{\Psi}$ is Lipschitz continuous and bounded. According to Lemma \ref{lemma-Psi-Lipschitz}, the drift $\alpha_{\rm HJM}$ is Lipschitz continuous, too.

We have seen that for the existence of an affine realization the linear spaces $V^{\Psi}$ defined in (\ref{def-V-h0}) must necessarily be finite dimensional. As discussed in Remark \ref{remark-U} (and shown in Theorem \ref{thm-ddv-pre}), this is difficult to ensure with a driving L\'{e}vy process having jumps, unless the volatility $\sigma$ is constant on the affine spaces generated the realization. Therefore, and because of Theorem \ref{thm-ddv-pre}, we make the following assumptions:
\begin{itemize}
\item There exists a finite dimensional subspace $W \subset H_{\beta}$ with $\sigma(H_{\beta}) \subset W$.

\item Each $w \in W$ is quasi-exponential. Then, the linear space
\begin{align*}
V := \langle (d/dx)^n w : w \in W \text{ and } n \in \mathbb{N}_0 \rangle
\end{align*}
is finite dimensional.

\item For each $h_0 \in \mathcal{D}(d/dx)$ the volatility $\sigma$ is constant on $h_0 + V$.
\end{itemize}

\begin{theorem}\label{thm-suff}
If the previous conditions are satisfied, then the HJMM equation (\ref{HJMM}) has a minimal realization generated by $V$.
\end{theorem}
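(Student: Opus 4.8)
The plan is to construct, for every $h_0 \in \mathcal{D}(d/dx)$, an explicit foliation $(\mathcal{M}_t^{(h_0)})_{t \geq 0}$ generated by $V$ that passes through $h_0$, to check the three conditions (\ref{domain-pre})--(\ref{sigma-pre}), and then to invoke Theorem \ref{thm-foliation-2}; minimality is treated afterwards via Lemma \ref{lemma-ddv} and the necessary conditions of Section \ref{sec-nec}. First I would record two structural facts. Since $\sigma(H_\beta) \subset W$ and every $w \in W$ is quasi-exponential, the space $V$ contains $W$ (the case $n=0$), lies in $\mathcal{D}(d/dx)$, and is invariant under $d/dx$, because $(d/dx)(d/dx)^n w = (d/dx)^{n+1}w \in V$. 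Moreover, as $\sigma$ takes values in the finite-dimensional space $W \subset \mathcal{D}(d/dx)$ and is Lipschitz and bounded on $H_\beta$, the equivalence of norms on $W$ together with $\| \cdot \|_\beta \leq \| \cdot \|_{\mathcal{D}(d/dx)}$ shows that $\sigma(\mathcal{D}(d/dx)) \subset \mathcal{D}(d/dx)$ and that $\sigma$ is Lipschitz and bounded on $(\mathcal{D}(d/dx), \| \cdot \|_{\mathcal{D}(d/dx)})$; hence Lemma \ref{lemma-Psi-Lipschitz}(4) applies, and $\alpha_{\rm HJM}$ maps $\mathcal{D}(d/dx)$ into itself and is Lipschitz there.

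Next, fixing $h_0 \in \mathcal{D}(d/dx)$, I would let $\psi$ be the solution of the deterministic HJMM equation $\psi'(t) = \frac{d}{dx}\psi(t) + \alpha_{\rm HJM}(\psi(t))$, $\psi(0) = h_0$, and set $\mathcal{M}_t^{(h_0)} := \psi(t) + V$. To make this rigorous I would solve the semilinear equation in the Hilbert space $(\mathcal{D}(d/dx), \| \cdot \|_{\mathcal{D}(d/dx)})$, on which $(S_t)_{t \geq 0}$ is a $C_0$-semigroup and $\alpha_{\rm HJM}$ is Lipschitz, obtaining a mild solution $\psi \in C(\mathbb{R}_+; \mathcal{D}(d/dx))$; since $h_0 \in \mathcal{D}(d/dx)$ and $t \mapsto \alpha_{\rm HJM}(\psi(t))$ is continuous with values in $\mathcal{D}(d/dx)$, the regularity theory for semilinear evolution equations upgrades $\psi$ to a classical solution, so that $\psi \in C^1(\mathbb{R}_+; H_\beta)$, $\psi(t) \in \mathcal{D}(d/dx)$, and $\psi$ satisfies its defining equation in $H_\beta$. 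This yields a foliation generated by $V$ with $h_0 \in \mathcal{M}_0^{(h_0)} = h_0 + V$.

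Now I would verify the three conditions. Condition (\ref{domain-pre}) is immediate from $\psi(t) \in \mathcal{D}(d/dx)$ and $V \subset \mathcal{D}(d/dx)$, and (\ref{sigma-pre}) holds because $\sigma(H_\beta) \subset W \subset V$. For (\ref{nu-pre}) the crucial point is that $\psi(t) \in \mathcal{D}(d/dx)$, so by hypothesis $\sigma$, and therefore $\alpha_{\rm HJM}$, is constant on the leaf $\psi(t) + V$; thus for $h = \psi(t) + v$ with $v \in V$,
\begin{align*}
\nu(h) = \frac{d}{dx}\psi(t) + \frac{d}{dx}v + \alpha_{\rm HJM}(\psi(t)) = \psi'(t) + \frac{d}{dx}v \in \psi'(t) + V = T\mathcal{M}_t^{(h_0)},
\end{align*}
where I used the $d/dx$-invariance of $V$ and the equation for $\psi$. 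Since $\alpha_{\rm HJM}$ and $\sigma$ are Lipschitz, Theorem \ref{thm-foliation-2} shows that $(\mathcal{M}_t^{(h_0)})_{t \geq 0}$ is invariant; as $h_0 \in \mathcal{D}(d/dx)$ was arbitrary, the HJMM equation has an affine realization generated by $V$.

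For minimality, let $U$ generate another affine realization. By Lemma \ref{lemma-ddv} we have $\sigma(H_\beta) \subset U$, so that $U$ contains the minimal admissible generator $W = \langle \sigma(H_\beta) \rangle$. It then suffices to show that $U$ is invariant under $d/dx$, for then $U$ contains the smallest $d/dx$-invariant subspace containing $W$, which is exactly $V$. To obtain this invariance I would apply Theorem \ref{thm-foliation-1} to the $U$-realization, giving $\nu(h_0 + u) - \nu(h_0) \in U$ for all $u \in U$, and then use the necessary conditions of Section \ref{sec-nec} --- the finite-dimensionality of $V^{\Psi}$ forced by (\ref{eqn-lin-ind}) and the power-series argument underlying Theorem \ref{thm-ddv-pre}(3) --- to conclude that $\sigma$ is constant along the leaves $h_0 + U$, whence $\alpha_{\rm HJM}(h_0 + u) = \alpha_{\rm HJM}(h_0)$ and $\frac{d}{dx}u \in U$. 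The technical heart of the existence part is obtaining $\psi$ as a genuine $C^1$ solution with values in $\mathcal{D}(d/dx)$, since only then do (\ref{domain-pre}) and the identity $\nu(\psi(t)+v) = \psi'(t) + \frac{d}{dx}v$ hold; for minimality the delicate point is establishing that the competing subspace $U$ is $d/dx$-invariant, which is where the machinery of Section \ref{sec-nec} enters.
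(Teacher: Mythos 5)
Your proposal is correct and follows essentially the same route as the paper: solve the deterministic evolution equation $\psi'(t) = \frac{d}{dx}\psi(t) + \alpha_{\rm HJM}(\psi(t))$ as a classical $\mathcal{D}(d/dx)$-valued solution (the paper cites \cite[Thm. 6.1.7]{Pazy} where you spell out the mild-to-classical upgrade and the norm-equivalence argument on $W$), define $\mathcal{M}_t^{(h_0)} = \psi(t) + V$, verify (\ref{domain-pre})--(\ref{sigma-pre}) using the constancy of $\alpha_{\rm HJM}$ on the leaves and the $d/dx$-invariance of $V$, and invoke Theorem \ref{thm-foliation-2}. For minimality the paper simply appeals to Theorem \ref{thm-ddv-pre}, which is the same mechanism (Lemma \ref{lemma-ddv} plus relation (\ref{eqn-lin-ind})) that you describe in more detail.
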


\begin{proof}
Let $h_0 \in \mathcal{D}(d/dx)$ be arbitrary. Since $V \subset \mathcal{D}(d/dx)$ and $\sigma(H_{\beta}) \subset W$, we have $\sigma(\mathcal{D}(d/dx)) \subset \mathcal{D}(d/dx)$ and $\sigma$ is Lipschitz continuous and bounded on $(\mathcal{D}(d/dx), \| \cdot \|_{\mathcal{D}(d/dx)})$. By Lemma \ref{lemma-Psi-Lipschitz}, we have $\alpha_{\rm HJM}(\mathcal{D}(d/dx)) \subset \mathcal{D}(d/dx)$, and $\alpha_{\rm HJM}$ is Lipschitz continuous on $(\mathcal{D}(d/dx), \| \cdot \|_{\mathcal{D}(d/dx)})$. Thus, according to \cite[Thm. 6.1.7]{Pazy}, there exists a classical solution $\psi \in C^1(\mathbb{R}_+;H_{\beta})$ with $\psi(\mathbb{R}_+) \subset \mathcal{D}(d/dx)$ of the evolution
equation
\begin{align*}
\left\{
\begin{array}{rcl}
\frac{d}{dt} \psi(t) & = & \frac{d}{dx} \psi(t) + \alpha_{\rm HJM}(\psi(t))
\medskip
\\ \psi(0) & = & h_0.
\end{array}
\right.
\end{align*}
Defining the foliation $(\mathcal{M}_t^{(h_0)})_{t \geq 0}$ by $\mathcal{M}_t^{(h_0)} := \psi(t) + V$, relation (\ref{domain-pre}) is fulfilled, and we have (\ref{sigma-pre}), because $\sigma(H_{\beta}) \subset W$. Let $t \geq 0$ and $v \in V$ be arbitrary. By the HJM drift condition (\ref{alpha-HJM}), the drift $\alpha_{\rm HJM}$ is constant on $\psi(t) + V$, and hence, we obtain
\begin{align*}
\nu(\psi(t) + v) &= \frac{d}{dx} \psi(t) + \frac{d}{dx} v + \alpha_{\rm HJM}(\psi(t) + v) 
\\ &= \frac{d}{dt} \psi(t) - \alpha_{\rm HJM}(\psi(t)) + \frac{d}{dx} v + \alpha_{\rm HJM}(\psi(t))
\\ &= \frac{d}{dt} \psi(t) + \frac{d}{dx} v \in \frac{d}{dt} \psi(t) + V = T \mathcal{M}_t,
\end{align*}
showing (\ref{nu-pre}). Theorem \ref{thm-foliation-2} applies and yields that
the foliation $(\mathcal{M}_t^{(h_0)})_{t \geq 0}$ is invariant for the HJMM equation (\ref{HJMM}). Consequently, the HJMM equation (\ref{HJMM}) has an affine realization generated by $V$. The minimality follows from Theorem \ref{thm-ddv-pre}.
\end{proof}

\begin{remark}
In particular, for every volatility structure of the form
\begin{align*}
\sigma(h) = \sum_{i=1}^p \Phi_i(h) \lambda_i, \quad h \in H_{\beta}
\end{align*}
with quasi-exponential functions $\lambda_1,\ldots,\lambda_p \in H_{\beta'}^0$, the HJMM equation (\ref{HJMM}) has a minimal realization generated by
\begin{align*}
V = \langle (d/dx)^n \lambda_1 : n \in \mathbb{N}_0 \rangle + \ldots + \langle (d/dx)^n \lambda_p : n \in \mathbb{N}_0 \rangle,
\end{align*}
provided that for each $h_0 \in \mathcal{D}(d/dx)$ the mappings $\Phi_1,\ldots,\Phi_p : H_{\beta} \rightarrow \mathbb{R}$ are constant on the affine space $h_0 + V$.
\end{remark}

\section{Constant volatility}\label{sec-const}

In this section, we apply our previous results for the particular case of a constant volatility $\sigma \in H_{\beta'}^0$ with $\sigma \neq 0$. 

\begin{corollary}\label{cor-det}
The following statements are equivalent:
\begin{enumerate}
\item The HJMM equation (\ref{HJMM}) has an affine realization.

\item $\sigma$ is quasi-exponential.
\end{enumerate}
In either case, the HJMM equation (\ref{HJMM}) has a minimal realization generated by $V = \langle (d/dx)^n \sigma : n \in \mathbb{N}_0 \rangle$.
\end{corollary}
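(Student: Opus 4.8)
The plan is to establish the two implications separately, reading off the minimal generating subspace in the process; the workhorses are the sufficiency result (Theorem~\ref{thm-suff}) for (2)$\Rightarrow$(1) and the necessary relation (\ref{eqn-lin-ind}) for (1)$\Rightarrow$(2). The feature that makes the constant case so rigid is that $h \mapsto \sigma(h) \equiv \sigma$ is constant on \emph{every} affine subspace, so the constancy hypothesis required by both tools holds automatically; moreover $\Sigma(h) = T\sigma = -\int_0^{\bullet} \sigma(\eta)\,d\eta$ is then completely independent of $h$.

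For (2)$\Rightarrow$(1) I would set $W := \langle \sigma \rangle$. This is one-dimensional, and as $\sigma$ is quasi-exponential every element of $W$ is quasi-exponential, so $V = \langle (d/dx)^n w : w \in W,\ n \in \mathbb{N}_0 \rangle = \langle (d/dx)^n \sigma : n \in \mathbb{N}_0 \rangle$ is finite dimensional. A constant volatility is trivially Lipschitz continuous and bounded and is constant on $h_0 + V$ for each $h_0 \in \mathcal{D}(d/dx)$, so the hypotheses of Theorem~\ref{thm-suff} are satisfied and it produces a realization generated by this $V$.

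For (1)$\Rightarrow$(2), suppose there is an affine realization generated by some subspace $V_0$. Continuity of $\sigma$ and Lemma~\ref{lemma-ddv} give $\sigma \in V_0$, and relation (\ref{eqn-lin-ind}) furnishes $g_0$ with $\frac{d}{dx} v + \frac{d}{dx}\Psi(\Sigma(h_0+v)) + g_0 \in V_0$ for all $v \in V_0$. Since $\Sigma(h_0+v)$ does not depend on $v$, the middle term is a fixed vector, and subtracting the instance $v=0$ leaves $\frac{d}{dx} v \in V_0$ for all $v \in V_0$, i.e.\ $V_0$ is invariant under $d/dx$. Combined with $\sigma \in V_0$ and $\dim V_0 < \infty$, this yields $\langle (d/dx)^n \sigma : n \in \mathbb{N}_0 \rangle \subset V_0$ with finite dimension, so $\sigma$ is quasi-exponential. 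Applying this inclusion to an arbitrary generating subspace simultaneously shows that $V = \langle (d/dx)^n \sigma : n \in \mathbb{N}_0 \rangle$ is contained in every other one, which is exactly the minimality claimed in the statement.

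The point I would treat most carefully — and what I see as the only real obstacle — is a bookkeeping one: conditions (\ref{cond-f-n0}) and (\ref{cond-lambda-lin}) are standing assumptions only in Section~\ref{sec-nec}, while here I want the equivalence under the sole hypothesis $c + F(\mathbb{R}) > 0$. I would therefore check that the borrowed ingredient, the derivation of (\ref{eqn-lin-ind}) (equivalently, the third assertion of Theorem~\ref{thm-ddv-pre}), uses only the existence of the realization and the continuity of $\sigma$, and not (\ref{cond-f-n0}) or (\ref{cond-lambda-lin}); the text already records that it does not. This is precisely why the constant — and more generally the constant-direction — case evades the stronger moment condition (\ref{cond-f-n0}), consistent with the remark after Corollary~\ref{cor-ddv-pre}.
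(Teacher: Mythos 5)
Your proposal is correct and follows essentially the same route as the paper, which simply cites Theorems \ref{thm-ddv-pre} and \ref{thm-suff}: you invoke Theorem \ref{thm-suff} for (2)$\Rightarrow$(1) and unpack the third statement of Theorem \ref{thm-ddv-pre} via relation (\ref{eqn-lin-ind}) for (1)$\Rightarrow$(2) and the minimality. Your extra care about conditions (\ref{cond-f-n0}) and (\ref{cond-lambda-lin}) is well placed and resolved exactly as the paper indicates, since the first and third assertions of Theorem \ref{thm-ddv-pre} do not use them.
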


\begin{proof}
This is an immediate consequence of Theorems \ref{thm-ddv-pre} and \ref{thm-suff}.
\end{proof}

\begin{remark}
Consequently, for constant volatility
structures we obtain exactly the same criterion for the existence of an
affine realization as in the classical diffusion case, where the
HJMM equation (\ref{HJMM}) is driven by a Wiener process, namely the function $\lambda$ has to be quasi-exponential; see e.g. \cite{Bj_Sv, Bj_La, Tappe-Wiener}.
\end{remark}

\section{Constant direction volatility}\label{sec-ddv}

In this section, we shall tighten the statement of Corollary \ref{cor-ddv-pre} and present some consequences.

Throughout this section, we assume that the HJMM equation (\ref{HJMM}) has an affine realization generated by some subspace $V \subset H_{\beta}$. 
We suppose that the volatility $\sigma \in H_{\beta,\beta'}^{\Psi}$ is continuous. According to Lemma \ref{lemma-Psi-Lipschitz}, the drift $\alpha_{\rm HJM}$ is continuous, too. Moreover, we assume that $\dim W = 1$, where $W := \langle \sigma(H_{\beta}) \rangle$, and that $F(\mathbb{R}) \neq 0$, where $F$ denotes the L\'{e}vy measure of the driving L\'{e}vy process $X$ in (\ref{HJMM}).

\begin{theorem}\label{thm-ddv}
The following statements are true:
\begin{enumerate}
\item For each $h_0 \in H_{\beta}$ the volatility $\sigma$ is constant on the affine space $h_0 + V$.

\item Each $v \in V$ is quasi-exponential, and we have $\langle (d/dx)^n v : n \in \mathbb{N}_0 \rangle \subset V$.
\end{enumerate}
\end{theorem}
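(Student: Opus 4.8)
The plan is to establish statement (1) first and then read off statement (2) from it together with the constant-volatility part of Theorem \ref{thm-ddv-pre}. Since $\dim W = 1$ with $W = \langle \sigma(H_{\beta}) \rangle$, I would first write $\sigma(h) = \Phi(h) \lambda$ for a fixed $\lambda \in H_{\beta'}^0 \setminus \{0\}$ and a continuous map $\Phi : H_{\beta} \rightarrow \mathbb{R}$; putting $\Lambda := T\lambda$, injectivity of $T$ gives $\Lambda \neq 0$ and $\Sigma(h_0 + v) = \Phi(h_0 + v) \Lambda$. The derivations of relation (\ref{eqn-lin-ind}) and of the finite-dimensionality of $V^{\Psi}$ in Remark \ref{remark-U} used neither (\ref{cond-f-n0}) nor (\ref{cond-lambda-lin}), so they remain available here for every $h_0 \in \mathcal{D}(d/dx)$. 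The observation that replaces (\ref{cond-f-n0}) is that $F(\mathbb{R}) \neq 0$ already forces $a_{2k} > 0$ for all $k \geq 1$: by (\ref{psi-der-2}) and the positivity of $x \mapsto x^{2k}$ off the origin, $\int_{\mathbb{R}} x^{2k} F(dx) > 0$ whenever $F \neq 0$, all moments being finite because $0 \in {\rm Int}\, K \subset \mathcal{D}(\Psi)$.

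For the main step I would prove statement (1) for $h_0 \in \mathcal{D}(d/dx)$ by contradiction: if $\Phi$ is not constant on $h_0 + V$, then, $h_0 + V$ being connected and $\Phi$ continuous, $\Phi(h_0 + V)$ is a nondegenerate interval and thus contains, for every $m \in \mathbb{N}$, $m$ distinct nonzero points $\theta_1, \ldots, \theta_m$ of a common sign. Each $\theta_i = \Phi(h_0 + v_i)$ gives $\Psi(\theta_i \Lambda) = \Psi(\Sigma(h_0 + v_i)) \in V^{\Psi}$, so it suffices to show these are linearly independent as functions, which upon letting $m \rightarrow \infty$ contradicts $\dim V^{\Psi} < \infty$. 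Assuming $\sum_{i=1}^m \xi_i \Psi(\theta_i \Lambda) = 0$, the scalar function $F(u) := \sum_{i=1}^m \xi_i \Psi(\theta_i u)$ vanishes at $u = \Lambda(x)$ for every $x \geq 0$, hence along a sequence $u_n \rightarrow 0$ with $u_n \neq 0$; such $u_n$ exist because $\Lambda(\mathbb{R}_+)$ is a nondegenerate interval containing $0 = \Lambda(0)$, and this is precisely where I dispense with (\ref{cond-lambda-lin}). For $|u|$ below the radius of convergence in (\ref{Taylor-series-Psi}) divided by $\max_i |\theta_i|$, we have $F(u) = \sum_{n \geq 0} a_n ( \sum_{i=1}^m \xi_i \theta_i^n ) u^n$, so the identity theorem yields $a_n \sum_{i=1}^m \xi_i \theta_i^n = 0$ for all $n$. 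Taking $n = 2k$ and using $a_{2k} \neq 0$ gives $\sum_{i=1}^m \xi_i \theta_i^{2k} = 0$ for $k = 1, \ldots, m$; since the $\theta_i^2$ are distinct and positive, the associated Vandermonde-type matrix is invertible, whence $\xi_1 = \ldots = \xi_m = 0$.

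It remains to pass from $\mathcal{D}(d/dx)$ to all of $H_{\beta}$ and to obtain statement (2). For arbitrary $h_0 \in H_{\beta}$ and $v \in V$, I would take $h_0^{(k)} \in \mathcal{D}(d/dx)$ with $h_0^{(k)} \rightarrow h_0$; constancy on $h_0^{(k)} + V$ gives $\Phi(h_0^{(k)} + v) = \Phi(h_0^{(k)})$, and letting $k \rightarrow \infty$ with $\Phi$ continuous yields $\Phi(h_0 + v) = \Phi(h_0)$, proving (1). For (2), fix $h_0 \in \mathcal{D}(d/dx)$; by (1) the map $v \mapsto \Psi(\Sigma(h_0 + v))$ is constant on $V$, so (\ref{eqn-lin-ind}) collapses to $\frac{d}{dx} v + g_1 \in V$ for all $v \in V$ with a fixed $g_1 \in H_{\beta}$; taking $v = 0$ shows $g_1 \in V$, whence $\frac{d}{dx} V \subset V$. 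This is exactly the conclusion of the third statement of Theorem \ref{thm-ddv-pre}, and iterating it gives $\langle (d/dx)^n v : n \in \mathbb{N}_0 \rangle \subset V$ together with the quasi-exponentiality of each $v \in V$.

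I expect the linear-independence step to be the main obstacle: one must secure enough nonvanishing Taylor coefficients to run a Vandermonde argument. This is where the weaker hypothesis $F(\mathbb{R}) \neq 0$ just suffices, since it only guarantees the even coefficients $a_{2k}$, which in turn forces the restriction of the test values $\theta_i$ to a single sign so that the squares $\theta_i^2$ stay distinct; and exploiting the full range $\Lambda(\mathbb{R}_+)$ rather than the germ of $\Lambda$ at $0$ is what allows (\ref{cond-lambda-lin}) to be omitted.
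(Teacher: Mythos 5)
Your proposal is correct and follows essentially the same route as the paper's own proof: a contradiction via $\dim V^{\Psi} = \infty$, an interval of attained scalar multiples of $\Lambda$, the identity theorem for power series applied along a sequence in $\Lambda(\mathbb{R}_+)$ tending to $0$, positivity of the even coefficients $a_{2k}$ forced by $F(\mathbb{R}) \neq 0$, a Vandermonde argument in the distinct squares $\theta_i^2$, and finally the density/continuity extension to all $h_0 \in H_{\beta}$ and the derivation of statement (2) from relation (\ref{eqn-lin-ind}). Your explicit insistence that the $\theta_i$ be nonzero and of a common sign is a small but welcome precision that the paper leaves implicit.
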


\begin{proof}
Let $h_0 \in \mathcal{D}(d/dx)$ be arbitrary. Suppose that $\sigma$ is not constant on $h_0 + V$. We will derive the contradiction
\begin{align}\label{contra-2}
\dim V^{\Psi} = \infty,
\end{align}
where the linear space $V^{\Psi}$ was defined in (\ref{def-V-h0}).
Since $T$ is injective, we have $\dim T(W) = 1$ with $T(W) = \langle \Sigma(H_{\beta}) \rangle$, and the mapping $\Sigma$ is not constant on $h_0 + V$. There exists $\Lambda \in T(W)$ with $T(W) = \langle \Lambda \rangle$. Since $\Sigma$ is not constant on $h_0 + V$, there exist $a,b \in \mathbb{R}$ with $a < b$ and $a \Lambda, b \Lambda \in \Sigma(h_0 + V)$. By the continuity of $\Sigma$ we obtain
\begin{align}\label{intervalue}
\theta \Lambda \in \Sigma(h_0 + V) \quad \text{for all $\theta \in [a,b]$.}
\end{align}
Now, let $m \in \mathbb{N}$ be arbitrary. By (\ref{intervalue}) there exist $\theta_1,\ldots,\theta_m \in [a,b]$ with $|\theta_i| \neq |\theta_j|$ for $i \neq j$ such that
\begin{align*}
\theta_i \Lambda \in \Sigma(h_0 + V) \quad \text{for all $i=1,\ldots,m$.}
\end{align*}
We will show that
\begin{align}\label{dim-m}
\dim \langle \Psi(\theta_i \Lambda) \rangle = m.
\end{align}
Indeed, let $\xi_1,\ldots,\xi_m \in \mathbb{R}$ be such that
\begin{align}
\sum_{i=1}^m \xi_i \Psi(\theta_i \Lambda) = 0.
\end{align}
By the power series representation (\ref{Taylor-series-Psi}) there exists $\eta > 0$ such that
\begin{align*}
\sum_{i=1}^m \xi_i \Psi(\theta_i x) &= \sum_{i=1}^m \xi_i \sum_{n=0}^{\infty} a_n (\theta_i x)^n 
\\ &= \sum_{n=0}^{\infty} a_n \bigg( \sum_{i=1}^m \xi_i \theta_i^n \bigg) x^{n}, \quad x \in (-\eta,\eta)
\end{align*}
and we obtain
\begin{align*}
\sum_{n=0}^{\infty} a_n \bigg( \sum_{i=1}^m \xi_i \theta_i^n \bigg) x^{n} = 0, \quad x \in \Lambda(\mathbb{R}_+) \cap (-\eta,\eta).
\end{align*}
Since $\lambda \neq 0$ and $\Lambda(0) = 0$, there exists a sequence $(x_n)_{n \in \mathbb{N}} \subset \Lambda(\mathbb{R}_+) \cap (-\eta,\eta)$ with $x_n \neq 0$, $n \in \mathbb{N}$ and $x_n \rightarrow 0$. Therefore, the identity theorem for power series applies and yields
\begin{align*}
a_n \sum_{i=1}^m \xi_i \theta_i^n = 0, \quad n \in \mathbb{N}_0.
\end{align*}
Since $F(\mathbb{R}) \neq 0$ by assumption, relations (\ref{Taylor-series-Psi}), (\ref{psi-der-2}) show that $a_n > 0$ for every even $n \in \mathbb{N}$. It follows that
$B \xi = 0$, where $B \in \mathbb{R}^{m \times m}$ denotes the Vandermonde matrix $B_{ki} = \theta_i^{2k}$ for $k = 1,\ldots,m$ and $i = 1,\ldots,m$. Since $|\theta_i| \neq |\theta_j|$ for $i \neq j$, we deduce that $\xi_1 = \ldots = \xi_m = 0$, which proves
(\ref{dim-m}).
Since $m \in \mathbb{N}$ was arbitrary, we conclude (\ref{contra-2}), which contradicts Remark \ref{remark-U}. Consequently, $\sigma$ is constant on $h_0 + V$.

Now, let $h_0 \in H_{\beta}$ be arbitrary. Since $\mathcal{D}(d/dx)$ is dense in $H_{\beta}$, there exists a sequence $(h_n)_{n \in \mathbb{N}} \subset \mathcal{D}(d/dx)$ with $h_n \rightarrow h_0$. By the continuity of $\sigma$, for each $v \in V$ we obtain
\begin{align*}
\sigma(h_0) = \lim_{n \rightarrow \infty} \sigma(h_n) = \lim_{n \rightarrow \infty} \sigma(h_n + v) = \sigma(h_0 + v),
\end{align*}
showing that $\sigma$ is constant on $h_0 + V$. The second statement follows from relation (\ref{eqn-lin-ind}).
\end{proof}

\begin{remark}
The assumption $\dim W = 1$ implies that the volatility $\sigma$ is of the form $\sigma(h) = \Phi(h) \lambda$ with a real-valued mapping $\Phi : H_{\beta} \rightarrow \mathbb{R}$ and a function $\lambda \in H_{\beta'}^0$, whence we speak about constant direction volatility. Theorem \ref{thm-ddv} shows that in the presence of jumps we obtain restrictions on the mapping $\Phi$, which do occur in the Wiener driven case, see e.g. \cite{Bj_Sv, Bj_La, Tappe-Wiener}.
\end{remark}

Now, we assume that $\sigma = \phi \circ \ell$ with a continuous mapping $\phi : \mathbb{R} \rightarrow H_{\beta'}^0$ and a continuous linear functional $\ell : H_{\beta} \rightarrow \mathbb{R}$. We suppose that $\ell(W) = \mathbb{R}$.

\begin{corollary}\label{cor-ddv}
The following statements are true:
\begin{enumerate}
\item The volatility $\sigma$ is constant.

\item $\sigma$ is quasi-exponential, and we have $\langle (d/dx)^n \sigma : n \in \mathbb{N}_0 \rangle \subset V$.
\end{enumerate}
\end{corollary}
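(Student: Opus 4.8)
The plan is to derive both assertions from Theorem \ref{thm-ddv} by exploiting the special composite form $\sigma = \phi \circ \ell$ together with the surjectivity hypothesis $\ell(W) = \mathbb{R}$. All the standing assumptions of this section (an affine realization generated by $V$, continuity of $\sigma$, $\dim W = 1$ and $F(\mathbb{R}) \neq 0$) remain in force, and $\sigma = \phi \circ \ell$ is continuous as a composition of continuous maps, so Theorem \ref{thm-ddv} is applicable.

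First I would record the geometry of the range. By Lemma \ref{lemma-ddv} we have $\sigma(H_{\beta}) \subset V$, and since $V$ is a linear subspace this forces $W = \langle \sigma(H_{\beta}) \rangle \subset V$. Consequently $\mathbb{R} = \ell(W) \subset \ell(V) \subset \mathbb{R}$, so that $\ell(V) = \mathbb{R}$. This is the crucial observation: the finite-dimensional generating space $V$ already surjects onto $\mathbb{R}$ under $\ell$.

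For the first statement, I fix $h_0 \in H_{\beta}$ and invoke Theorem \ref{thm-ddv}, which says that $\sigma$ is constant on the affine leaf $h_0 + V$. Writing $\sigma = \phi \circ \ell$, for $v \in V$ we have $\sigma(h_0 + v) = \phi(\ell(h_0) + \ell(v))$, and as $v$ ranges over $V$ the argument $\ell(h_0) + \ell(v)$ ranges over all of $\ell(h_0) + \ell(V) = \mathbb{R}$ by the previous step. Hence constancy of $\sigma$ on $h_0 + V$ is precisely the assertion that $\phi$ is constant on $\mathbb{R}$, so $\phi$, and therefore $\sigma = \phi \circ \ell$, is globally constant.

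Finally, for the second statement, I would observe that the (now constant) value of $\sigma$ is an element of $V$ by Lemma \ref{lemma-ddv}. Applying the second part of Theorem \ref{thm-ddv} to this particular element $v = \sigma \in V$ yields at once that $\sigma$ is quasi-exponential and that $\langle (d/dx)^n \sigma : n \in \mathbb{N}_0 \rangle \subset V$. I do not expect a genuine obstacle here; the only point requiring care is the inclusion $W \subset V$, which underlies the identity $\ell(V) = \mathbb{R}$ and hence the passage from leaf-wise to global constancy. Alternatively, statement (2) could be routed through the constant-volatility analysis of Corollary \ref{cor-det}, whose minimality clause delivers the same inclusion.
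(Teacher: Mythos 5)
Your argument is correct and follows essentially the same route as the paper: both proofs first obtain $W \subset V$ from Lemma \ref{lemma-ddv}, then use the leaf-wise constancy from Theorem \ref{thm-ddv} together with the surjectivity of $\ell$ on $W$ (the paper phrases this via the isomorphism $\ell|_W$ with $h_0 = 0$, you via $\ell(V) = \mathbb{R}$ for arbitrary $h_0$) to conclude that $\phi$, hence $\sigma$, is globally constant, and finally deduce statement (2) from the second part of Theorem \ref{thm-ddv}.
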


\begin{proof}
Since $W \subset V$ by Lemma \ref{lemma-ddv}, applying Theorem \ref{thm-ddv} with $h_0 = 0$ yields that the volatility $\sigma$ is constant on $W$. Note that $\ell|_W : W \rightarrow \mathbb{R}$ is an isomorphism. Therefore, for all $x,y \in \mathbb{R}$ we obtain
\begin{align*}
\phi(x) = \phi(\ell(\ell^{-1} x)) = \sigma(\ell^{-1} x) = \sigma(\ell^{-1} y) = \phi(\ell(\ell^{-1} y)) = \phi(y),
\end{align*}
showing that $\sigma$ is constant. The second statement follows from Theorem \ref{thm-ddv}.
\end{proof}

Now, we assume that in addition $\dim V = 1$. Then, according to Lemma \ref{lemma-ddv} we have $V = W$.

\begin{corollary}\label{cor-short-rate}
There are $\rho \in \mathbb{R}$, $\rho \neq 0$ and $\theta \in ( \beta' / 2,\infty)$ such that 
\begin{align}\label{vol-Vasicek}
\sigma \equiv \rho e^{-\theta \bullet}.
\end{align}
\end{corollary}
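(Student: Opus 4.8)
The plan is to extract everything substantive from Corollary \ref{cor-ddv} and then solve a single first-order linear differential equation. First I would invoke Corollary \ref{cor-ddv}: under the standing hypotheses of this section together with $\dim V = 1$, it tells us that the (now constant) volatility $\sigma \in H_{\beta'}^0$ is quasi-exponential and satisfies $\langle (d/dx)^n \sigma : n \in \mathbb{N}_0 \rangle \subset V$. Since $\sigma \neq 0$ (because $\dim W = 1$) and $\sigma = (d/dx)^0 \sigma \in V$ with $\dim V = 1$, we have $V = \langle \sigma \rangle$; this is consistent with the identification $V = W = \langle \sigma \rangle$ furnished by Lemma \ref{lemma-ddv}. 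Applying the inclusion above with $n = 1$ then yields $(d/dx)\sigma \in V = \langle \sigma \rangle$, so there is a constant $c \in \mathbb{R}$ with $\sigma' = c\,\sigma$.

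Next I would solve this differential equation. Since $\sigma$ is absolutely continuous, the relation $\sigma' = c\,\sigma$ integrates to $\sigma(x) = \rho\, e^{cx}$ with $\rho := \sigma(0)$. If $\rho = 0$, then $\sigma \equiv 0$, contradicting $\sigma \neq 0$; hence $\rho \neq 0$. It remains to pin down $c$ from the two defining features of the space $H_{\beta'}^0$. The decay condition $\lim_{x \to \infty} \sigma(x) = 0$ in (\ref{def-subspace}) rules out $c \geq 0$ and forces $c < 0$, so we may write $c = -\theta$ with $\theta > 0$ and obtain $\sigma \equiv \rho\, e^{-\theta \bullet}$.

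Finally I would read off the admissible range of $\theta$ from the finiteness of the norm (\ref{def-norm}). With $\sigma'(x) = -\theta \rho\, e^{-\theta x}$ we have
\begin{align*}
\int_{\mathbb{R}_+} |\sigma'(x)|^2 e^{\beta' x} dx = \theta^2 \rho^2 \int_0^{\infty} e^{(\beta' - 2\theta)x} dx,
\end{align*}
which is finite if and only if $\beta' - 2\theta < 0$, that is, $\theta > \beta'/2$. Thus $\theta \in (\beta'/2, \infty)$, as claimed. The argument is essentially routine once Corollary \ref{cor-ddv} is in hand, and I do not expect a genuine obstacle; the only points requiring care are the correct extraction of the two constraints $c < 0$ and $\theta > \beta'/2$ from the two separate parts of the definition of $H_{\beta'}^0$ (decay at infinity, and square-integrability of the weighted derivative, respectively), together with the observation that $\rho \neq 0$ follows from $\dim W = 1$.
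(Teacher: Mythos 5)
Your proposal is correct and follows essentially the same route as the paper's proof: invoke Corollary \ref{cor-ddv} to get that $\sigma$ is constant with $\langle (d/dx)^n \sigma : n \in \mathbb{N}_0 \rangle \subset V$, use $\dim V = 1$ to deduce the exponential form, and then read off $\theta \in (\beta'/2,\infty)$ from the weighted norm (\ref{def-norm}) and the decay condition (\ref{def-subspace}). You merely spell out the intermediate ODE step $\sigma' = c\sigma$ that the paper leaves implicit, and you correctly note that both conditions on $H_{\beta'}^0$ are needed (the decay condition to exclude $c=0$, the norm to force $\theta > \beta'/2$).
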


\begin{proof}
By Corollary \ref{cor-ddv}, the volatility $\sigma$ is constant, and we have $\langle (d/dx)^n \sigma : n \in \mathbb{N}_0 \rangle \subset V$. Since $\dim V = 1$, we obtain that (\ref{vol-Vasicek}) is satisfied for some $\rho \in \mathbb{R}$, $\rho \neq 0$ and $\theta \in \mathbb{R}$. 
By the Definition (\ref{def-norm}) of the norm $\| \cdot \|_{\beta'}$ we have
\begin{align*}
\int_{\mathbb{R}_+} |\lambda'(x)|^2 e^{\beta' x} dx < \infty,
\end{align*}
and, by the Definition (\ref{def-subspace}) of the subspace $H_{\beta'}^0$ we have
\begin{align*}
\lim_{x \rightarrow \infty} \lambda(x) = 0.
\end{align*}
We conclude that $\theta \in ( \beta' / 2,\infty)$, which finishes the proof.
\end{proof}

From the literature, see, e.g. \cite{Jeffrey, Bj_Sv,
Filipovic-Teichmann-royal}, it is well-known that for Wiener driven
interest rate models the following three types of affine short rate
realizations exist:
\begin{itemize}
\item The Ho-Lee model.

\item The Hull-White extension of the Vasi\u{c}ek model.

\item The Hull-White extension of the Cox-Ingersoll-Ross model.
\end{itemize}
The evaluation at the short end $\ell : H_{\beta}
\rightarrow \mathbb{R}$, $\ell(h) := h(0)$ is a continuous linear functional (see, e.g. \cite[Thm. 4.1]{Tappe-Wiener}). Thus, applying Corollary \ref{cor-short-rate} for the L\'{e}vy case with jumps, we recognize (\ref{vol-Vasicek}) as the Hull-White extension of the Vasi\u{c}ek model, whereas an analogon for the Hull-White extension of the Cox-Ingersoll-Ross model does not exist.

\begin{remark}
The Ho-Lee model would correspond to (\ref{vol-Vasicek}) with $\theta = 0$. Note that in our framework this volatility is even excluded in the Wiener case because of the technical reason that $\alpha_{\rm HJM} \notin H_{\beta}$. Indeed, for $\alpha_{\rm HJM} \in H_{\beta}$ one necessarily needs that $\lim_{x \rightarrow \infty} \sigma(x) = 0$, see relation (5.13) in \cite{fillnm}, which is not satisfied for $\sigma \equiv \rho$ with $\rho \neq 0$.
\end{remark}

If the volatility $\sigma$ is of the form (\ref{vol-Vasicek}), then the HJMM equation (\ref{HJMM}) has a one-dimensional realization, see Corollary \ref{cor-det}.
By a well-known technique (see, e.g. \cite[Prop. 2.8]{Tappe-Wiener}), we can choose the short rate $r_t(0)$ as state process, whence we speak about a short rate realization.

\begin{appendix}

\section{Results about power series with several variables}\label{app-series}

For the proof of Theorem \ref{thm-ddv-pre} we require some results about power series with several variables. Since these results were not immediately available in the literature, we provide self-contained proofs in this appendix.

\begin{lemma}\label{lemma-Cauchy}
Let $(a_k)_{k \in \mathbb{N}_0} \subset \mathbb{R}$ and $(b_l)_{l \in \mathbb{N}_0} \subset \mathbb{R}$ be sequences such that the series
$\sum_{k \in \mathbb{N}_0} a_k$ and $\sum_{l \in \mathbb{N}_0} b_l$ are absolutely convergent. Then, the series
$$\sum_{(k,l) \in \mathbb{N}_0^2} a_k b_l$$
is also absolutely convergent, and we have
\begin{align*}
\sum_{(k,l) \in \mathbb{N}_0^2} a_k b_l = \bigg( \sum_{k \in \mathbb{N}_0} a_k \bigg) \cdot \bigg( \sum_{l \in \mathbb{N}_0} b_l \bigg).
\end{align*}
\end{lemma}

\begin{proof}
This is a direct consequence of the Cauchy product formula for absolutely convergent series (see, e.g. \cite[Satz 8.3]{Forster}).
\end{proof}

In what follows, let $p \in \mathbb{N}$ be a positive integer. Let $K \subset \mathbb{R}^p$ be a compact subset. For a function $f : K \rightarrow  \mathbb{R}$ we define the supremum norm
\begin{align*}
\| f \|_K := \sup \{ |f(x)| : x \in K \}.
\end{align*}
We will need the following version of Weierstrass' criterion of uniform convergence.

\begin{lemma}\label{lemma-Weierstrass}
Let $f_n : K \rightarrow \mathbb{R}$, $n \in \mathbb{N}_0$ be functions such that
$\sum_{n=0}^{\infty} \| f_n \|_K < \infty$.
Then, the series $\sum_{n=0}^{\infty} f_n$ converges absolutely and uniformly on $K$ to a continuous function 
$$f : K \rightarrow \mathbb{R}, \quad f(z) = \sum_{n \in \mathbb{N}} f_n.$$
\end{lemma}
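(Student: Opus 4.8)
The plan is to run the classical Weierstrass $M$-test argument, with the numbers $\|f_n\|_K$ serving as the majorizing constants. First I would establish pointwise absolute convergence: for every fixed $z \in K$ we have $|f_n(z)| \le \|f_n\|_K$, so the numerical series $\sum_{n=0}^{\infty} |f_n(z)|$ is dominated term by term by the convergent series $\sum_{n=0}^{\infty} \|f_n\|_K$ and therefore converges. This defines the candidate limit $f(z) := \sum_{n=0}^{\infty} f_n(z)$ on all of $K$ and already gives the asserted absolute convergence.

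Next I would upgrade pointwise convergence to uniform convergence by estimating the tails of the partial sums $S_N := \sum_{n=0}^{N} f_n$. For every $z \in K$,
\begin{align*}
|f(z) - S_N(z)| = \bigg| \sum_{n=N+1}^{\infty} f_n(z) \bigg| \le \sum_{n=N+1}^{\infty} |f_n(z)| \le \sum_{n=N+1}^{\infty} \|f_n\|_K,
\end{align*}
and taking the supremum over $z \in K$ yields $\|f - S_N\|_K \le \sum_{n=N+1}^{\infty} \|f_n\|_K$. Since the right-hand side is the tail of a convergent series, it tends to $0$ as $N \to \infty$; this is precisely uniform convergence of $S_N$ to $f$ on $K$.

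Finally, continuity of $f$ would follow from the standard fact that a uniform limit of continuous functions on $K$ is continuous: each partial sum $S_N$ is a finite sum of the summands $f_n$ and hence continuous, and $S_N \to f$ uniformly by the previous step, so $f$ is continuous.

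There is no genuine obstacle here, as the whole argument is self-contained and rests only on the completeness of $\mathbb{R}$ together with the tail estimate above. The one point that deserves care is the continuity conclusion: the absolute and uniform convergence assertions hold for arbitrary functions $f_n$ with $\sum_n \|f_n\|_K < \infty$, but continuity of $f$ must be inherited from continuity of the individual $f_n$, so this step tacitly uses that the $f_n$ are continuous — which is exactly the situation in the intended application to power series, where each $f_n$ is a monomial of the form $c_{(k,l)} y^k z^l$.
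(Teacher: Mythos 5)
Your proof is correct and takes essentially the same route as the paper: the paper's own proof is a one-line citation of the one-variable Weierstrass criterion (Forster, Satz 21.2) with the remark that it adapts literally, whereas you write out that classical $M$-test argument in full (pointwise absolute convergence by comparison, uniform convergence via the tail estimate $\| f - S_N \|_K \leq \sum_{n=N+1}^{\infty} \| f_n \|_K$, continuity of the uniform limit). Your closing observation is also apt: the lemma as stated omits the hypothesis that the $f_n$ are continuous, which is needed for the continuity conclusion and does hold in the paper's application, where each $f_n$ is a monomial.
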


\begin{proof}
We can literally adapt the proof for functions with one variable, see e.g. \cite[Satz 21.2]{Forster}.
\end{proof}

For $x \in \mathbb{R}^p$ and $k \in \mathbb{N}_0^p$ we introduce the notation
\begin{align*}
x^k := x_1^{k_1} \cdot \ldots \cdot x_p^{k_p}.
\end{align*}
For $a \in \mathbb{R}^p$ and $r > 0$ let $K_r(a)$ be the compact ball
\begin{align}\label{comp-ball}
K_r(a) := \{ x \in \mathbb{R}^p : \| x - a \|_{\mathbb{R}^p} \leq r \}.
\end{align}

\begin{proposition}\label{prop-power-series}
Let $\pi : \mathbb{N}_0 \rightarrow \mathbb{N}_0^p$ be a bijective mapping, let $(c_n)_{n \in \mathbb{N}_0^p} \subset \mathbb{R}$ and $a \in \mathbb{R}^p$ be such that the power series
\begin{align}\label{power-series-single}
\sum_{\genfrac{}{}{0pt}{}{n = 0}{k = \pi(n)}}^{\infty} c_k (x-a)^k
\end{align}
converges for some $x \in \mathbb{R}^p$ with $x_i \neq a_i$ for all $i=1,\ldots,p$. Then, for all $0 < r < \min \{ |x_1-a_1|,\ldots,|x_p-a_p| \}$ the power series (\ref{power-series-single}) converges absolutely and uniformly on $K_r(a)$ to a continuous function 
\begin{align*}
f : K_r(a) \rightarrow \mathbb{R}, \quad f(z) = \sum_{k \in \mathbb{N}_0^p} c_k (z-a)^k.
\end{align*}
\end{proposition}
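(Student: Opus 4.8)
The plan is to run the classical Abel-type argument, adapted to $p$ variables: I would use convergence at the single point $x$ to bound the coefficients, manufacture a geometric majorant valid uniformly on $K_r(a)$, and then invoke the two preceding lemmas. First I would extract boundedness from convergence. Since the series $\sum_{n} c_{\pi(n)} (x-a)^{\pi(n)}$ converges, its terms tend to zero and are in particular bounded, so there is a constant $M > 0$ with $|c_k (x-a)^k| \le M$ for all $k \in \mathbb{N}_0^p$. Because $x_i \neq a_i$ for every $i$, I can set $q_i := r / |x_i - a_i|$, and the hypothesis $r < \min_i |x_i - a_i|$ guarantees $q_i \in (0,1)$ for each $i$.

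Next I would produce the uniform majorant on $K_r(a)$. For $z \in K_r(a)$ the Euclidean norm dominates each coordinate, so $|z_i - a_i| \le \| z - a \|_{\mathbb{R}^p} \le r$ for every $i$. Factoring out $|x_i - a_i|^{k_i}$ in each coordinate then yields
\begin{align*}
|c_k (z-a)^k| = |c_k (x-a)^k| \prod_{i=1}^p \left( \frac{|z_i - a_i|}{|x_i - a_i|} \right)^{k_i} \le M \prod_{i=1}^p q_i^{k_i}, \quad z \in K_r(a),\ k \in \mathbb{N}_0^p,
\end{align*}
whence $\| c_k (\cdot - a)^k \|_{K_r(a)} \le M \prod_{i=1}^p q_i^{k_i}$.

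Then I would sum the majorant and conclude. Each one-dimensional geometric series $\sum_{k_i \ge 0} q_i^{k_i}$ converges absolutely to $(1-q_i)^{-1}$, and applying Lemma \ref{lemma-Cauchy} inductively over the $p$ coordinates shows that $\sum_{k \in \mathbb{N}_0^p} \prod_{i=1}^p q_i^{k_i}$ converges absolutely, with value $\prod_{i=1}^p (1-q_i)^{-1}$. Since $\pi$ is a bijection and all terms are nonnegative, the reordered sum $\sum_{n} \| c_{\pi(n)} (\cdot - a)^{\pi(n)} \|_{K_r(a)}$ has the same finite value, bounded by $M \prod_{i=1}^p (1-q_i)^{-1} < \infty$. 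With the summability of the sup-norms in hand, Lemma \ref{lemma-Weierstrass} yields that the series converges absolutely and uniformly on $K_r(a)$ to a continuous function $f$; absolute convergence makes the limit independent of the enumeration $\pi$, so $f(z) = \sum_{k \in \mathbb{N}_0^p} c_k (z-a)^k$, as claimed.

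The main obstacle is not analytic but combinatorial: passing cleanly between the $\pi$-indexed single series that appears in the hypothesis and the unordered sum over the multi-index set $\mathbb{N}_0^p$, and factorizing the geometric majorant into a product of $p$ one-dimensional geometric series. These are precisely the points that Lemma \ref{lemma-Cauchy} (absolute convergence and the Cauchy product, hence order-independence) and Lemma \ref{lemma-Weierstrass} (the $M$-test for several variables) are designed to handle, so once the majorant $M \prod_{i=1}^p q_i^{k_i}$ is in place the remainder is routine.
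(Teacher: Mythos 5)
Your proof is correct and follows essentially the same route as the paper's: extract a uniform bound $M$ on the terms at $x$, build the geometric majorant $M\prod_i q_i^{k_i}$ on $K_r(a)$, sum it via Lemma \ref{lemma-Cauchy}, and conclude with Lemma \ref{lemma-Weierstrass}. The only cosmetic difference is that you make the inductive use of Lemma \ref{lemma-Cauchy} over the $p$ coordinates and the order-independence of the absolutely convergent sum explicit, which the paper leaves implicit.
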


\begin{proof}
For each $k \in \mathbb{N}_0^p$ we define the function 
\begin{align*}
f_k : \mathbb{R}^p \rightarrow \mathbb{R}, \quad f_k(z) := c_k (z-a)^k. 
\end{align*}
Since the series (\ref{power-series-single}) converges, there exists a constant $M \geq 0$ such that 
\begin{align*}
|f_k(x)| \leq M \quad \text{for all $k \in \mathbb{N}_0^p$.} 
\end{align*}
Let $0 < r < \min \{ |x_1-a_1|,\ldots,|x_p-a_p| \}$ be arbitrary. We define the vector
\begin{align*}
\Theta := \bigg( \frac{r}{| x_1-a_1 |}, \ldots, \frac{r}{| x_p-a_p |} \bigg) \in (0,1)^p.
\end{align*}
For all $z \in K_r(a)$ and $k \in \mathbb{N}_0^p$ we obtain
\begin{align*}
|f_{k}(z)| &= | c_{k} (z-a)^{k} | = |c_{k} (x-a)^{k}| \frac{|(z-a)^{k}|}{|(x-a)^{k}|} 
\\ &= |f_k(x)| \frac{|z_1-a_1|^{k_1} \cdot \ldots \cdot |z_p-a_p|^{k_p}}{|x_1-a_1|^{k_1} \cdot \ldots \cdot |x_p - a_p|^{k_p}} 
\\ &\leq M \frac{r^{k_1} \cdot \ldots \cdot r^{k_p}}{|x_1-a_1|^{k_1} \cdot \ldots \cdot |x_p-a_p|^{k_p}} = M \Theta_1^{k_1} \cdot \ldots \cdot \Theta_p^{k_p} = M \Theta^{k}.
\end{align*}
By the geometric series and Lemma \ref{lemma-Cauchy}, the series
\begin{align*}
\sum_{k \in \mathbb{N}_0^p} \Theta^{k} = \prod_{i=1}^p \bigg( \sum_{k \in \mathbb{N}_0} \Theta_i^k \bigg)
\end{align*}
converges absolutely. Therefore, we obtain
\begin{align*}
\sum_{\genfrac{}{}{0pt}{}{n = 0}{k = \pi(n)}}^{\infty} \| f_k \|_{K_r(a)} < \infty,
\end{align*}
and hence, applying Lemma \ref{lemma-Weierstrass} concludes the proof.
\end{proof}

\end{appendix}

\end{document}